\newcommand{\ra}{\rightarrow}
\newcommand{\cO}{\mathcal{O}}
\theoremstyle{plain}
\newtheorem{theorem}{Theorem}[section]
\newtheorem{lem}[theorem]{Lemma}
\newtheorem{prop}[theorem]{Proposition}
\newtheorem{cor}[theorem]{Corollary}
\newtheorem{rem}[theorem]{Remark}
\numberwithin{equation}{section}
\begin{document}
\title[Clifford]{Lower bounds for Clifford indices in rank three}

\author{H. Lange}
\author{P. E. Newstead}

\address{H. Lange\\Department Mathematik\\
              Universit\"at Erlangen-N\"urnberg\\
              Bismarckstra\ss e $1\frac{ 1}{2}$\\
              D-$91054$ Erlangen\\
              Germany}
              \email{lange@mi.uni-erlangen.de}
\address{P.E. Newstead\\Department of Mathematical Sciences\\
              University of Liverpool\\
              Peach Street, Liverpool L69 7ZL, UK}
\email{newstead@liv.ac.uk}

\thanks{Both authors are members of the research group VBAC (Vector Bundles on Algebraic Curves). The second author 
would like to thank the Department Mathematik der Universit\"at 
         Erlangen-N\"urnberg for its hospitality}
\keywords{Semistable vector bundle, Clifford index, gonality}
\subjclass[2000]{Primary: 14H60; Secondary: 14F05, 32L10}

\begin{abstract}
Clifford indices for semistable vector bundles on a smooth projective curve of genus at least 4 were defined
in previous papers of the authors. In the present paper, we establish lower bounds for the Clifford indices 
for rank 3 bundles. As a consequence we show that, on smooth plane curves of degree at least 10, there exist non-generated 
bundles of rank 3 computing one of the Clifford indices.
\end{abstract}
\maketitle

\section{Introduction}

Let $C$ be a smooth projective curve of genus $g \geq 4$ defined over an algebraically closed field of characteristic zero. 
In  \cite{cl}, we generalised the classical Clifford index $\gamma_1$ of $C$ to semistable bundles in the 
following way. First we define, for any vector bundle $E$ of rank $n$ and degree $d$, 
$$
\gamma(E) := \frac{1}{n} \left(d - 2(h^0(E) -n)\right) = \mu(E) -2\frac{h^0(E)}{n} + 2,
$$
where $\mu(E)=\frac{d}n$.
Then 
the Clifford indices $\gamma_n$ and $\gamma_n'$ are defined by 
$$
\gamma_n := \min_{E} \left\{ \gamma(E) \left|  
\begin{array}{c}   E \;\mbox{semistable of rank}\; n \\
h^0(E) \geq n+1,\; \mu(E) \leq g-1
\end{array} \right\} \right.
$$
and
$$
\gamma_n' := \min_{E} \left\{ \gamma(E) \;\left| 
\begin{array}{c} E \;\mbox{semistable of rank}\; n \\
h^0(E) \geq 2n,\; \mu(E) \leq g-1
\end{array} \right\}. \right.
$$
We say that a bundle $E$ {\em contributes} 
to $\gamma_n$ if it is semistable of rank $n$ with $\mu(E) \leq g-1$ and $h^0(E)\ge n+1$ and that $E$ 
{\em computes} $\gamma_n$ if in addition $\gamma(E)=\gamma_n$. Similar definitions are made for $\gamma'_n$. 

In \cite{cl} we obtained a number of results for the Clifford indices, including a useful lower bound for $\gamma_2'$ and 
a formula for $\gamma_2$. In a second paper \cite{cl1} we showed that, under certain conditions, any bundle computing 
$\gamma_n$ or $\gamma_n'$ is generated (a feature assumed in some earlier definitions of E. Ballico 
\cite{b}). These conditions hold when $n=2$ but could fail for $n\ge3$; however, for $n=3$, we were able to show 
that bundles computing either index are generically generated.

Our main object in this paper is to obtain a useful lower bound for $\gamma_3'$; this is almost certainly not best 
possible, but it does enable us to find a lower bound for $\gamma_3$ and in some cases to compute $\gamma_3$. 
To state our results, we recall that the {\em gonality sequence} 
$d_1,d_2,\ldots,d_r,\ldots$ of $C$ is defined by 
$$
d_r := \min \{ d_L \;|\; L \; \mbox{a line bundle on} \; C \; \mbox{with} \; h^0(L) \geq r +1\}.
$$
Our main theorem is \\

\noindent{\bf Theorem \ref{thm3.4}.}
\begin{em} Suppose $\gamma_1\ge3$. Then
$$\gamma_3'\ge\min\left\{\frac{d_9}3-2,\frac{2\gamma_2'+1}3\right\}.$$
\end{em}

\noindent (For $\gamma_1\le2$, we know the precise values of all the Clifford indices (see Remark \ref{rem4.2}).)

As a consequence of this theorem, we obtain a lower bound for $\gamma_3$ when $\frac{d_2}2\ge\frac{d_3}3$ and precise 
values for $\gamma_3$ for general curves of genus $g\ge7$ (Proposition \ref{pr3.8}), for plane curves of degree $\delta\ge10$ 
(Proposition \ref{pr4.6}) and for curves of Clifford dimension at least 3 (see Proposition \ref{pr4.9}).

The proof of the theorem involves two different methods for obtaining a lower bound for $\gamma_3'$. The first method 
(in section 2) is similar to arguments in \cite{cl} and involves considering the dimension of $h^0(F)$ for suitably 
chosen subbundles $F$ of $E$; this works well for $d\le 2g+3$. The second method (in section 3) involves taking a 
proper subbundle $F$ of maximal slope and estimating the Clifford indices of $F$ and $E/F$; this works best when 
$d\ge 2g+4$. The combination of the methods gives the theorem.

Throughout the paper (except in Remark \ref{rem4.2}), $C$ will denote a smooth projective curve of genus $g$ and Clifford 
index $\gamma_1\ge3$ (hence also $g\ge7$) defined 
over an algebraically closed field of characteristic zero. For a vector bundle $G$ on $C$, the rank and 
degree of $G$ will be denoted by $r_G$ and $d_G$ respectively. We shall make use of the following facts about 
the gonality sequence:
$$d_r<d_{r+1} \mbox{ for all }r;\ \ d_r\ge\min\{\gamma_1+2r,g+r-1\}$$
(see \cite[Lemmas 4.2(a) and 4.6]{cl}). The following lemma plays an important role in section 2.\\
 
\noindent{\bf Lemma of Paranjape-Ramanan.}
\begin{em}Let $E$ be a vector bundle on $C$ of rank $n\ge2$ with $h^0(E)=n+s$ ($s\ge1$) such that $E$ possesses 
no proper subbundle $F$ with $h^0(F)>r_F$. Then $d_E\ge  d_{ns}$.\end{em}

\noindent This is a restatement of \cite[Lemma 3.9]{pr} (see \cite[Lemma 4.8]{cl}).

\section{Lower bound for $\gamma'_3$, first method}

Let $E$ be a bundle of degree $d$ computing $\gamma'_3$. 
According to \cite[Theorem 3.3]{cl1}, $E$ is generically generated.
Suppose $h^0(E) = 3 + s$ for some $s \geq 3$.\\

\begin{lem} \label{lem2.1}
If $E$ has no proper subbundle $F$ with $h^0(F) \geq r_F + 1$, then 
\begin{equation} \label{eq2.1}
\gamma(E) \geq \frac{d_9}{3} -2.
\end{equation}
\end{lem}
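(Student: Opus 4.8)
The plan is to apply the Lemma of Paranjape-Ramanan directly. The hypothesis of Lemma \ref{lem2.1} is precisely the hypothesis of that lemma (no proper subbundle $F$ with $h^0(F)>r_F$), applied with $n=3$ and with $h^0(E)=3+s$, $s\ge3\ge1$. Hence we immediately obtain
$$d=d_E\ge d_{3s}.$$

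Next I would rewrite $\gamma(E)$ in terms of $d$ and $s$. Since $h^0(E)=3+s$ and $\rk E=3$,
$$\gamma(E)=\frac{1}{3}\bigl(d-2(h^0(E)-3)\bigr)=\frac{1}{3}(d-2s)\ge\frac{1}{3}(d_{3s}-2s),$$
using the inequality from the previous step. So \eqref{eq2.1} will follow once we show that
$$d_{3s}-2s\ge d_9-6\qquad\text{for every }s\ge3.$$

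For this I would invoke the strict monotonicity $d_r<d_{r+1}$ of the gonality sequence recorded in the introduction: passing from $d_9$ up to $d_{3s}$ takes $3s-9$ steps, each increasing the value by at least $1$, so $d_{3s}\ge d_9+(3s-9)$. Therefore
$$d_{3s}-2s\ge d_9+(3s-9)-2s=d_9+s-9\ge d_9-6$$
since $s\ge3$, and plugging this back gives $\gamma(E)\ge\frac13(d_9-6)=\frac{d_9}{3}-2$, as required.

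There is no serious obstacle here; the argument is a short combination of the Paranjape-Ramanan lemma with the elementary monotonicity of the $d_r$. The one point worth noting is that the estimate is exactly tight when $s=3$, i.e.\ when $h^0(E)=6$, which is the minimal value of $h^0$ allowed for a bundle contributing to $\gamma'_3$; this is precisely why the index $9=3\cdot 3$ appears in the bound. For $s>3$ the surplus $s-9$ versus $-6$ gives room to spare, so no finer analysis of the gonality sequence is needed.
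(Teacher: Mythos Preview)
Your proof is correct and follows essentially the same approach as the paper: apply the Paranjape--Ramanan lemma to get $d\ge d_{3s}$, use the strict monotonicity of the gonality sequence to obtain $d_{3s}\ge d_9+3s-9$, and then compute $\gamma(E)\ge\frac{d_9}{3}+\frac{s}{3}-3\ge\frac{d_9}{3}-2$. The paper's proof is just a more compressed version of what you wrote.
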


\begin{proof} By the Lemma of Paranjape-Ramanan, 
$d \geq d_{3s} \geq d_9 + 3s -9$. This gives
$$
\gamma(E) \geq \frac{d_9}{3} + s - 3 - \frac{2s}{3} \geq \frac{d_9}{3} -2.
$$
\end{proof}

\begin{lem}  \label{lem2.2} If $E$ has a line subbundle $F$ with $h^0(F) \geq 2$, 
then
$$
\gamma(E) \geq \min \left\{ \frac{2\gamma_1 + 1}{3}, \frac{1}{3} \left( 4\gamma_1 + 2g + 2 -d \right) \right\}.
$$
\end{lem}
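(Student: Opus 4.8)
The plan is to exploit the line subbundle $F$ with $h^0(F)\ge 2$ together with the quotient $Q:=E/F$, which is a rank $2$ bundle, and to balance the Clifford-type contributions of the two pieces against the slope constraint coming from the semistability of $E$. Write $d_F=\deg F$ and $d_Q=\deg Q$, so $d_F+d_Q=d$. First I would record that, since $F$ contributes a linear series of degree $d_F$ with $h^0(F)\ge 2$, the definition of $\gamma_1$ (via the gonality sequence, $d_F\ge d_1\ge\gamma_1+2$) gives a lower bound on $d_F$; more precisely $d_F\ge\gamma_1+2$, and in fact one can push this further if $h^0(F)$ is larger. I would also need the semistability of $E$ in the form $\mu(F)\le\mu(E)=\frac d3$, i.e. $d_F\le\frac d3$, and correspondingly $d_Q\ge\frac{2d}3$.

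Next I would estimate $h^0(E)$ from above in terms of $h^0(F)$ and $h^0(Q)$: from the exact sequence $0\to F\to E\to Q\to 0$ we get $h^0(E)\le h^0(F)+h^0(Q)$. To control $h^0(Q)$ I would split into cases according to the behaviour of the rank $2$ quotient $Q$. If $Q$ is itself rather positive, one uses the Clifford bound for rank $2$, or Clifford's theorem / the base-point-free pencil trick type estimates, to bound $h^0(Q)$ in terms of $d_Q$ and $\gamma_1$; the cleanest route is to invoke the known lower bound for $\gamma_2$ (equivalently a bound $h^0(Q)\le\frac12(d_Q-2\gamma_1)+2$ in the relevant range, or the Clifford bound $h^0(Q)\le\frac{d_Q}2+1$ when $\mu(Q)\le g-1$). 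Feeding these into $\gamma(E)=\mu(E)-\frac23 h^0(E)+2$ and using $h^0(F)\le\frac{d_F}2+1$ (Clifford for the line bundle $F$, valid since $d_F\le\frac d3\le g-1$ in the contributing range) yields
$$\gamma(E)\ \ge\ \frac d3-\frac23\!\left(\tfrac{d_F}{2}+1+\tfrac{d_Q}{2}+1\right)+2\ =\ \frac d3-\frac{d}{3}-\frac{4}{3}+2\ =\ \frac23,$$
which is too weak, so the point is that one must do better on at least one of the two summands using $\gamma_1$.

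The real work is therefore to improve the estimate on whichever factor is "large": if $d_F$ is small then $h^0(F)$ is small by Clifford and the gonality bound $d_F\ge\gamma_1+2$ costs us, giving the term $\frac{2\gamma_1+1}{3}$; if instead $Q$ carries most of the sections, then bounding $h^0(Q)$ by the rank $2$ Clifford-type inequality $h^0(Q)\le\frac12(d_Q-2\gamma_1)+2$ (available when $Q$, suitably normalised, contributes to $\gamma_2$, i.e. when $\mu(Q)\le g-1$) together with $d_Q=d-d_F\le d-\gamma_1-2$ produces the second term $\frac13(4\gamma_1+2g+2-d)$ — here the $2g$ enters precisely because one must also handle the sub-case where $\mu(Q)>g-1$, in which case $h^0(Q)\le d_Q-g+2$ by Riemann–Roch and the contribution is estimated directly. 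Taking the minimum over the two regimes (small $d_F$ versus large $d_Q$, and within the latter the $\mu(Q)\lessgtr g-1$ dichotomy) gives the stated bound. I expect the main obstacle to be the bookkeeping in the case $\mu(Q)>g-1$: there Clifford's theorem is not directly applicable to $Q$, one instead replaces $Q$ by $K\otimes Q^\vee$ or uses Riemann–Roch plus the fact that $h^1(E)$ can be controlled through $h^1(F)$ and $h^1(Q)$, and one must check that the resulting estimate still dominates $\frac13(4\gamma_1+2g+2-d)$; a secondary nuisance is ensuring that $F$ and $Q$ genuinely "contribute" to the relevant lower-rank Clifford indices (the slope conditions $\mu(F)\le g-1$, $\mu(Q)\le g-1$, and $h^0\ge$ the required threshold), which may force a few more elementary sub-cases handled by direct Riemann–Roch.
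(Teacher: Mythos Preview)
Your plan diverges from the paper's argument and, as written, has a real gap in the treatment of the rank~$2$ quotient $Q=E/F$.

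The paper does not try to bound $h^0(Q)$ via a rank~$2$ Clifford inequality or via $\gamma_2$. Instead it writes $h^0(F)=t+1$, $h^0(Q)=2+v$, first disposes of the case $t\ge s/3$ directly (semistability gives $d\ge 3d_t$, hence $\gamma(E)\ge d_t-\frac{2s}{3}\ge\gamma_1$), and for $t<s/3$ invokes \cite[Lemma~7.2]{cl}, which gives
\[
d_Q\ \ge\ \min_{1\le u\le v-1}\{\,d_{2v},\ d_v+\tfrac{d}{3},\ d_u+d_{v-u}\,\}.
\]
Each branch is then estimated using only the gonality sequence inequalities $d_r\ge\min\{\gamma_1+2r,\,g+r-1\}$; the term $\frac13(4\gamma_1+2g+2-d)$ arises precisely from the sub-case $d_Q\ge d_u+d_{v-u}$ with exactly one of $d_u,d_{v-u}$ at least $g+\cdot-1$, after eliminating $s$ via $s=\frac12(d-3\gamma(E))$.

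Your proposed route breaks down because $Q$ need not be semistable: $F$ is just \emph{some} line subbundle with $h^0(F)\ge2$, not one of maximal slope, so nothing prevents $Q$ from being destabilised by a line subbundle. Hence neither the rank~$2$ Clifford theorem of \cite{bgn} nor the bound ``$\gamma(Q)\ge\gamma_2'$'' is available. Moreover, even when $Q$ is semistable and contributes, the inequality you write, $h^0(Q)\le\frac12(d_Q-2\gamma_1)+2$, is equivalent to $\gamma(Q)\ge\gamma_1$, which is too strong: for rank~$2$ one only has $\gamma(Q)\ge\gamma_2'$, and $\gamma_2'\le\gamma_1$ can be strict. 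The lemma's bound is expressed purely in terms of $\gamma_1$ (and $g,d$), so any argument must ultimately rest on line-bundle information; this is exactly what \cite[Lemma~7.2]{cl} provides by further filtering $Q$. If you want to salvage your approach, you would need to reproduce that lemma's case analysis (Paranjape--Ramanan when $Q$ has no sub-line-bundle with sections, otherwise split $Q$ by such a sub-line-bundle and bound both pieces via the gonality sequence), at which point you are essentially following the paper's proof.
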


\begin{proof}
Write $h^0(F) = t+1, \; t \geq 1$. Then $d_F \geq d_t$. So $d \geq 3d_t$ which gives
$$
\gamma(E) \geq d_t - \frac{2s}{3} \geq \gamma_1 
$$
if $t \geq \frac{s}{3}$.

If $t < \frac{s}{3}$, then $h^0(E/F) = 2 + v$ with $v > \frac{2s}{3} \geq 2$.
According to \cite[Lemma 7.2]{cl} we have
\begin{equation*}
d_{E/F} \geq \min_{1 \leq u \leq v-1} \left\{ d_{2v}, d_v + \frac{d}{3}, d_u + d_{v-u} \right\}.
\end{equation*} 
There are 3 possibilities:

(i) $d_{E/F} \geq d_{2v}$. This implies 
$$
d \geq d_t + d_{2v} \geq  \gamma_1 + 2t + d_6 + 2v -6 \geq \gamma_1 + 2s + d_6 -6, 
$$
since $v \geq 3$ and $t+v \geq s$, and hence
\begin{equation*} 
\gamma(E) \geq \frac{\gamma_1}{3} + \frac{d_6}{3} - 2 \geq \frac{\gamma_1}{3} + \frac{\gamma_1 + 7}{3} - 2 
= \frac{2\gamma_1 + 1}{3}. 
\end{equation*}

(ii) $d_{E/F} \geq d_v + \frac{d}{3}$. So $d \geq d_t + d_v + \frac{d}{3}$, which is equivalent 
to $\frac{2d}{3} \geq d_t + d_v$. We have $d_t \geq \gamma_1 + 2t$, since $d_t \leq g-1$.

If $d_v < g + v -1$, then $d_v \geq \gamma_1 + 2v$ and hence
$$
2 \gamma(E) \geq 2 \gamma_1 + 2t + 2v - \frac{4s}{3} \geq 2 \gamma_1 + \frac{2s}{3} \geq  2 \gamma_1 +2.
$$

If $d_v \geq g+v-1$, then we have 
$$
\frac{2d}{3} \geq d_t + d_v \geq \gamma_1 + 2t + g + v - 1,
$$ which gives
\begin{eqnarray*}
2 \gamma(E) &\geq & \gamma_1 + t + s + g - 1 - \frac{4s}{3}\\
&=& \gamma_1 + t + g - 1 - \frac{s}{3}\\
& \geq & \gamma_1 + t + \frac{g-1}{2} \geq 2 \gamma_1 + t \geq 2 \gamma_1 + 1.
\end{eqnarray*}
(Here we use Clifford's Theorem for $E$ (see \cite[Theorem 2.1]{bgn}) which implies $s \leq \frac{d}{2} \leq \frac{3(g-1)}{2}$.)
In both cases, $\gamma(E) > \gamma_1$ contradicting the fact that $E$ computes $\gamma'_3$.

(iii) $d_{E/F} \geq d_u + d_{v-u}$. So $d \geq d_t + d_u + d_{v-u}$. 

If $d_u < g+u-1$ and $d_{v-u} < g+v-u-1$, then
$$
d \geq 3 \gamma_1 + 2t + 2u + 2(v-u)  \geq 3 \gamma_1 + 2s.
$$
This gives $\gamma(E) \geq \gamma_1$.

If $d_u \geq g+u-1$ and $d_{v-u} < g + v - u - 1$, then
$$
d \geq 2 \gamma_1 + 2t +g + u - 1 + 2(v-u) \geq 2 \gamma_1 + s + t + v - u + g - 1
$$
and hence, using $\gamma(E) = \frac{d-2s}{3}$,
\begin{eqnarray*}
\gamma(E) &\geq & \frac{2\gamma_1}{3} + \frac{1}{3}(t+v-u+g-1) -\frac{s}{3}\\
& = & \frac{2\gamma_1}{3} + \frac{1}{3}(t+v-u+g-1) 
- \frac{1}{2} \left( \frac{d}{3} - \gamma(E) \right).
\end{eqnarray*}
This gives 
\begin{eqnarray*}
\frac{1}{2}\gamma(E) & \geq & \frac{2\gamma_1}{3} + \frac{1}{3}(t+v-u+g-1) - \frac{d}{6}.
\end{eqnarray*}
So 
\begin{equation*} 
\gamma(E) \geq \frac{4\gamma_1}{3} + \frac{2g-2-d}{3} + \frac{2}{3}(t+v-u) \geq \frac{1}{3}(4\gamma_1 +2g + 2-d).
\end{equation*}

If $d_u < g+u - 1$ and $d_{v-u} \geq g + v - u - 1$, we obtain the same inequality just by interchanging $u$ and $v-u$ in the 
above argument.

If $d_u \geq  g+ u -1$ and $d_{v-u} \geq g+v-u-1$, then
$$
d \geq \gamma_1 + 2t + g +u -1 + g + v - u - 1 \geq \gamma_1 + s + t + 2g - 2.
$$ 
Hence, using $\gamma(E) = \frac{d-2s}{3}$, 
$$
\gamma(E) \geq \frac{\gamma_1}{3} + \frac{t+2g-2}{3} - \frac{s}{3} = \frac{\gamma_1}{3} + \frac{t+2g-2}{3} 
- \frac{1}{2} \left( \frac{d}{3} - \gamma(E) \right)
$$
which gives
\begin{eqnarray*}
\frac{\gamma(E)}{2} &\geq &\frac{\gamma_1}{3} - \frac{d}{6} + \frac{t+2g-2}{3}\\
& \geq & \frac{\gamma_1}{3} - \frac{g-1}{2} + \frac{t + 2g-2}{3} = \frac{\gamma_1}{3} + \frac{g-1}{6} + \frac{t}{3}
\end{eqnarray*}
and hence, since $\gamma_1 \leq \frac{g-1}{2}$,
$$
\gamma(E) \geq \frac{2\gamma_1 +1}{3}.
$$
\end{proof}

\begin{lem} \label{lem2.3}
If $E$ has a subbundle $F$ of rank $2$ with $h^0(F) \geq 3$ and no line subbundle with $h^0 \geq 2$, then 
$$
\gamma(E) \geq \min \left\{ \gamma'_2,  \frac{2 \gamma_1 + 1}{3}, \frac{1}{3}( 2\gamma_1 + 2g + 4 -d) \right\}.
$$
\end{lem}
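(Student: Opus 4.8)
The plan is to analyse the short exact sequence $0\to F\to E\to E/F\to 0$, in which $E/F$ is a line bundle. Write $h^0(F)=2+t$ and $h^0(E/F)=1+v$, so $t\ge 1$, $v\ge 0$ and $t+v\ge s$; semistability of $E$ gives $\mu(F)\le\mu(E)=\frac d3\le g-1$ and $\mu(E/F)\ge\frac d3$, hence $d_F\le\frac{2d}3$. Since $E$ has no line subbundle with $h^0\ge 2$, neither does $F$ (the saturation in $E$ of a sub-line-bundle of $F$ is a line subbundle of $E$ with at least as many sections), so the Lemma of Paranjape--Ramanan applies to $F$ and gives
\[
d_F\ge d_{2t}\ge\min\{\gamma_1+4t,\,g+2t-1\},
\]
and for $v\ge 1$ we also have $d_{E/F}\ge d_v\ge\min\{\gamma_1+2v,\,g+v-1\}$. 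I will use $3\gamma(E)=d-2s$ and $\gamma_1\le\frac{g-1}2$ (hence $g\ge\gamma_1+4$) throughout.

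First I would dispose of the case $v=0$. The cohomology sequence forces $s\le t\le s+1$, and combining $d_F\le\frac{2d}3$ with $\gamma(F)=\frac{d_F-2t}2\le\frac{d_F-2s}2$ gives $\gamma(E)\ge\gamma(F)$. If $F$ is semistable, then $h^0(F)\ge 5$ and $\mu(F)\le g-1$, so $F$ contributes to $\gamma_2'$ and $\gamma(E)\ge\gamma(F)\ge\gamma_2'$. If $F$ is not semistable, take a line subbundle $L$ with $\mu(L)>\mu(F)$; by hypothesis $h^0(L)\le 1$, so $F/L$ is a line bundle with $h^0(F/L)\ge s+1$ and $d_{F/L}<\frac{d_F}2$. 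Then $d_{F/L}\ge d_s$, the possibility $d_s\ge g+s-1$ is excluded by $d_F\le 2g-2$, and using $t\le s+1$ one finds $\gamma(F)>\gamma_1$; hence $\gamma(E)>\gamma_1\ge\frac{2\gamma_1+1}3$.

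For $v\ge 1$ I would write $d=d_F+d_{E/F}$ and split into four cases according to which term of each of the two minima is active. When the $\gamma_1$-terms are active on both sides, $d\ge 2\gamma_1+4t+2v\ge 2\gamma_1+2s+2$, so $\gamma(E)\ge\frac{2\gamma_1+2}3$. When $d_F\ge g+2t-1$ and $d_{E/F}\ge\gamma_1+2v$, we get $d\ge\gamma_1+g-1+2(t+v)\ge\gamma_1+g-1+2s$, so $\gamma(E)\ge\frac{\gamma_1+g-1}3\ge\frac{2\gamma_1+1}3$. When $d_F\ge\gamma_1+4t$ and $d_{E/F}\ge g+v-1$, the key point is $4t+v=3t+(t+v)\ge 3+s$, giving $d\ge\gamma_1+g+2+s$; substituting $3\gamma(E)=d-2s$ turns this into exactly $\gamma(E)\ge\frac13(2\gamma_1+2g+4-d)$. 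Finally, when $d_F\ge g+2t-1$ and $d_{E/F}\ge g+v-1$, then $2t+v\ge 1+s$ gives $d\ge 2g-1+s$, and $g\ge\gamma_1+3$ again yields $\gamma(E)\ge\frac13(2\gamma_1+2g+4-d)$. Collecting the cases gives the stated bound.

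The work is essentially bookkeeping, and the main obstacle will be making sure that every branch of the case analysis lands on one of the three quantities in the stated minimum rather than on something weaker. The two delicate points are the case $v=0$, where the Lemma of Paranjape--Ramanan by itself is not strong enough and one genuinely needs the semistable/non-semistable dichotomy for $F$ in order to produce the term $\gamma_2'$; and the cases where a line-bundle quotient or sub-quotient satisfies $d_\bullet\ge g+\bullet-1$, in which $v$, and hence $s$, may be large, so that one must combine $t+v\ge s$, $t\ge 1$, $d_F\le\frac{2d}3$ and $\gamma_1\le\frac{g-1}2$ with care to recover $\frac13(2\gamma_1+2g+4-d)$ rather than a vacuous estimate.
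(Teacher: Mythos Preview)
Your argument is correct, and the four-case analysis for $v\ge1$ is essentially the same computation as the paper's four cases (with $u$ in place of your $v$). The difference lies in how the two proofs reach the term $\gamma_2'$.

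The paper first invokes generic generation of $E$ (from \cite[Theorem 3.3]{cl1}) to obtain $t\le s$, then splits according to whether $\frac{t}{2}>\frac{s}{3}$ or $\frac{t}{2}\le\frac{s}{3}$. In the former case it quotes \cite[Lemma 2.1]{cl1} to conclude $\gamma(E)>\gamma_2'$ directly; only in the latter case does it carry out the four-case estimate, where the extra hypothesis guarantees $u\ge1$. You instead split at $v=0$ versus $v\ge1$. For $v\ge1$ your four cases go through using only $t+v\ge s$ and $t\ge1$, so you never need the constraint $\frac{t}{2}\le\frac{s}{3}$; the range $\frac{s}{3}<\frac{t}{2}$ with $v\ge1$, which the paper handles via the cited lemma, is absorbed into your case analysis and lands on $\frac{2\gamma_1+1}{3}$ or $\frac13(2\gamma_1+2g+4-d)$ rather than on $\gamma_2'$. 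For $v=0$ you recover $\gamma_2'$ by the semistable/non-semistable dichotomy for $F$. The upshot is that your proof is self-contained, avoiding both external citations, at the cost of the extra $v=0$ paragraph.

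One small point: you assert $v\ge0$ at the outset, but $h^0(E/F)=0$ is not a priori excluded. This is harmless, since then $t=s+1$ and your $v=0$ argument applies verbatim (it uses only $t\ge s$ and $t\le s+1$); you should simply phrase the first case as $v\le0$. Also, when you pick the destabilising line subbundle $L\subset F$, take it saturated so that $F/L$ is genuinely a line bundle.
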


\begin{proof} 
Write $h^0(F) = 2 + t, \; t \geq 1$. 
Since $E$ is generically generated, we have $h^0(F) \leq 2 + s$. So
$$
1 \leq t \leq s.
$$
The quotient $E/F$ is a line bundle and $h^0(E/F) = 1 + u$ with $u \geq s-t$. 
The Lemma of Paranjape-Ramanan applied to $F$ gives 
$$
d_F \geq d_{2t}.
$$ 

If $\frac{t}{2} > \frac{s}{3}$, \cite[Lemma 2.1]{cl1} gives
\begin{equation} \label{eq4.4}
\gamma(E) > \gamma'_2.
\end{equation}

If $\frac{t}{2} \leq \frac{s}{3}$, then $u \geq s-t \geq \frac{s}{3}$. This implies $u \geq 1$ and 
hence
$$
d_{E/F} \geq d_u.
$$
This gives 
$$
d \geq d_{2t} + d_u.
$$
We distinguish 4 cases:

If $d_{2t} < 2t + g-1$ and $d_u < u +g-1$, then
$$
d \geq 2\gamma_1 + 4t + 2u \geq 2 \gamma_1 + 2s + 2t.
$$
So 
\begin{equation*} 
\gamma(E) \geq \frac{2\gamma_1 + 2t}{3} > \frac{2\gamma_1 + 1}{3}.
\end{equation*} 

If $d_{2t} \geq 2t + g-1$ and $d_u \geq u +g-1$, then 
$$
d \geq 2t +g-1 + u + g-1 \geq s + t + 2g-2.
$$
This implies, using $\gamma(E) = \frac{d-2s}{3}$,
$$
\gamma(E) \geq \frac{t+2g-2}{3} - \frac{s}{3} = \frac{t+2g-2}{3}  - \frac{d-3\gamma(E)}{6}.
$$
Hence, using $d \leq 3g-3$,
$$
\frac{\gamma(E)}{2} \geq \frac{t+2g-2}{3}  - \frac{d}{6} \geq \frac{t}{3} + \frac{g-1}{6} > \frac{g-1}{6} \geq \frac{\gamma_1}{3}.
$$
This gives 
\begin{equation} \label{eq4.5} 
\gamma(E) \geq \frac{2\gamma_1 + 1}{3}
\end{equation}

If $d_{2t} \geq 2t + g-1$ and $d_u < u +g-1$, then 
$$
d \geq \gamma_1 + 2t +g-1 + 2u \geq \gamma_1 + 2s + g-1
$$
and we get as above,
$$
\gamma(E) \geq \frac{\gamma_1}{3} + \frac{g-1}{3} \geq \gamma_1.
$$

Finally, if $d_{2t} < 2t + g-1$ and $d_u \geq u +g-1$, then
$$
d \geq \gamma_1 + 4t + u + g-1 \geq \gamma_1 + s + 3t + g-1.
$$
This gives
$$
\gamma(E) \geq \frac{\gamma_1}{3} -\frac{s}{3} + t + \frac{g-1}{3} = 
\frac{\gamma_1}{3} - \frac{d-3\gamma(E)}{6} + t + \frac{g-1}{3}.
$$
So
$$
\frac{\gamma(E)}{2} \geq \frac{\gamma_1}{3} - \frac{d}{6} + t + \frac{g-1}{3} 
$$
and hence
\begin{equation} \label{eq4.7}
\gamma(E) \geq  \frac{2}{3} \gamma_1 + \frac{2g-d+4}{3}.
\end{equation}
Combining \eqref{eq4.4}, \eqref{eq4.5} and \eqref{eq4.7} we get the result.
\end{proof}

\begin{prop} \label{prop4.1}
Let $E$ be a semistable bundle of degree $d$ computing $\gamma'_3$. Then
$$
\gamma(E) \geq \min \left\{ \frac{d_9}{3} -2, \gamma'_2, \frac{2\gamma_1+1}{3},   
\frac{1}{3}(2\gamma_1 + 2g-d+4) \right\}.
$$ 
\end{prop}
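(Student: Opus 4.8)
The plan is to assemble Proposition~\ref{prop4.1} from Lemmas~\ref{lem2.1}, \ref{lem2.2} and \ref{lem2.3} by a case analysis on the subbundles of $E$. First I would record the standing setup: since $E$ computes $\gamma'_3$ it is semistable of rank $3$ with $h^0(E)\ge 2\cdot 3=6$, so we may write $h^0(E)=3+s$ with $s\ge 3$, and by \cite[Theorem 3.3]{cl1} $E$ is generically generated. This is precisely the situation in which Lemmas~\ref{lem2.1}--\ref{lem2.3} were established, so all three are available.

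Next, note that a proper subbundle $F\subset E$ has rank $1$ or $2$, so the condition $h^0(F)>r_F$ means $h^0(F)\ge 2$ when $r_F=1$ and $h^0(F)\ge 3$ when $r_F=2$. Hence exactly one of the following holds: (a) $E$ has no proper subbundle $F$ with $h^0(F)\ge r_F+1$; (b) $E$ has a line subbundle with $h^0\ge 2$; (c) $E$ has no line subbundle with $h^0\ge 2$ but does have a rank-$2$ subbundle $F$ with $h^0(F)\ge 3$. These three alternatives are covered by Lemmas~\ref{lem2.1}, \ref{lem2.2} and \ref{lem2.3} respectively, each of which gives a lower bound for $\gamma(E)$.

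It then remains to check that each of these three lower bounds is at least the minimum appearing in the statement. In cases (a) and (c) this is immediate, since the bound obtained is literally one (or the minimum of several) of the terms $\frac{d_9}{3}-2$, $\gamma'_2$, $\frac{2\gamma_1+1}{3}$, $\frac13(2\gamma_1+2g-d+4)$. In case (b) the only point to verify is that $\frac13(4\gamma_1+2g+2-d)\ge\frac13(2\gamma_1+2g+4-d)$, which is equivalent to $2\gamma_1+2\ge 4$, i.e.\ $\gamma_1\ge 1$; this holds since $\gamma_1\ge 3$. Combining the three cases yields the asserted inequality.

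I do not expect a genuine obstacle here: all of the analytic content has already been carried out in the three lemmas, and the proposition is essentially the bookkeeping step that packages them together. The only things requiring (routine) care are the verification that the case division into (a), (b), (c) is exhaustive and the reconciliation of the two superficially different ``$2g-d$'' bounds via the trivial estimate $\gamma_1\ge 1$.
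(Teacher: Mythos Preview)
Your proposal is correct and follows essentially the same approach as the paper, which simply states that the proposition is a consequence of Lemmas~\ref{lem2.1}, \ref{lem2.2} and \ref{lem2.3}. You have merely spelled out the case division and the minor reconciliation $\frac{1}{3}(4\gamma_1+2g+2-d)\ge\frac{1}{3}(2\gamma_1+2g-d+4)$ that the paper leaves implicit.
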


\begin{proof}
This is a consequence of Lemmas \ref{lem2.1}, \ref{lem2.2} amd \ref{lem2.3}.
\end{proof}

\section{Lower bound for $\gamma'_3$, second method}

Let $E$ be a bundle of degree $d$ computing $\gamma'_3$. Then $d \leq 3g-3$ and $h^0(E) = 3 + s$ for some 
$s \geq 3$.

Let $F$ be a proper subbundle of $E$ of maximal slope. By \cite{ms} we have
\begin{equation} \label{ms}
\mu(E/F) - \mu(F) \leq g,
\end{equation}

\begin{lem} \label{lemrk1}
If $F$ has rank $1$, then
$$
\gamma(E) \geq \min \left\{ \frac{\gamma_1 + 2 \gamma'_2}{3}, \frac{2 \gamma'_2}{3} + \frac{d-2g}{9}, 
\frac{\gamma_1}{3} + \frac{2d-6}{9} \right\}.
$$ 
\end{lem}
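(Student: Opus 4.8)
The plan is to work with the exact sequence $0\to F\to E\to Q\to 0$, where $F$ is a line bundle and $Q:=E/F$ has rank $2$, and to estimate $\gamma(E)$ from below via $\gamma(F)$ and $\gamma(Q)$.

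First I would record the structural facts. Since $E$ is semistable and $F$ has maximal slope, $Q$ is semistable: a line subbundle $L\subset Q$ with $\mu(L)>\mu(Q)$ would pull back to a rank-$2$ subbundle $F'$ of $E$ with $\mu(F')>\mu(F)$ (or $\mu(F')>\mu(E)$ in the boundary case $\mu(F)=\mu(E)$), contradicting maximality of $F$ or semistability of $E$. Because $h^0(E)>0$, a nonzero section saturates to a line subbundle of non-negative degree, so $0\le d_F=\mu(F)\le\mu(E)=\frac d3\le g-1$; and \eqref{ms} gives $\mu(Q)-d_F\le g$, hence $d_F\ge\frac{d-2g}{3}$. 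From $h^0(E)\le h^0(F)+h^0(Q)$ together with $d=d_F+d_Q$ one gets the basic inequality $3\gamma(E)\ge\gamma(F)+2\gamma(Q)$.

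Then I would argue by cases. If $h^0(Q)\le3$, then $h^0(F)\ge h^0(E)-3=s\ge3$, so $\gamma(F)\ge\gamma_1$ (using $d_F\le g-1$), while $\gamma(Q)=\mu(Q)-h^0(Q)+2\ge\mu(Q)-1\ge\frac d3-1$ (as $Q$ is a quotient of the semistable $E$); the basic inequality then yields $\gamma(E)\ge\frac{\gamma_1}{3}+\frac{2d-6}{9}$, the third term. If $h^0(Q)\ge4$, the goal is to show $\gamma(Q)\ge\gamma_2'$. This is automatic when $\mu(Q)\le g-1$, since then $Q$ contributes to $\gamma_2'$; when $\mu(Q)>g-1$ I would pass to the Serre-dual bundle $K\otimes Q^\vee$, which is semistable of rank $2$ with the same Clifford index, slope $2g-2-\mu(Q)<g-1$, and $h^0=h^1(Q)$, so it contributes to $\gamma_2'$ provided $h^1(Q)\ge4$. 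Granting $\gamma(Q)\ge\gamma_2'$, I split on $h^0(F)$: if $h^0(F)\ge2$, then $\gamma(F)\ge\gamma_1$ and $3\gamma(E)\ge\gamma_1+2\gamma_2'$, the first term; if $h^0(F)\le1$, then $h^0(Q)\ge s+2$, and combining $h^0(Q)\le\mu(Q)-\gamma_2'+2$ with $d_F\ge\frac{d-2g}{3}$ gives $s\le\frac{d-d_F}{2}-\gamma_2'$, hence $\gamma(E)\ge\frac{2\gamma_2'}{3}+\frac{d-2g}{9}$, the second term.

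The case I expect to be the real obstacle is the one left over: $h^0(Q)\ge4$, $\mu(Q)>g-1$, and $h^1(Q)\le3$ (so $Q$ is almost non-special). Here neither $Q$ nor its Serre dual contributes to $\gamma_2'$, and the crude bounds $h^0(F)\le\frac{d_F}{2}+1$ and $h^0(Q)\le d_Q-2(g-1)+h^1(Q)$ only deliver $\gamma(E)\ge\frac{10g-18-2d}{9}$, which is too weak near $d=3g-3$. To close this case I would try to exploit the hypotheses more fully: maximality of $F$ forces every line subbundle of $Q$ to have degree $\le d_F$, which with the Nagata bound for rank-$2$ bundles sharpens the estimate to $d_F\ge\frac{d-g}{3}$; and the fact that $E$ actually \emph{computes} $\gamma_3'$ should either preclude $Q$ from being so positive, or permit replacing the rank-$1$/rank-$2$ splitting by a rank-$2$/rank-$1$ one (a rank-$2$ subbundle $F'$ with $F\subset F'\subset E$), reducing once more to the $\gamma_1$ and $\gamma_2'$ estimates. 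Making this last step yield the third term is where the main difficulty lies.
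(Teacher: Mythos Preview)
Your overall strategy matches the paper's exactly: bound $\gamma(F)$ and $\gamma(Q)$ separately via the basic inequality $3\gamma(E)\ge\gamma(F)+2\gamma(Q)$, and split according to whether $F$ and $Q$ (or its Serre dual) contribute to $\gamma_1$ and $\gamma_2'$. The cases you handle completely are done correctly.

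The gap is the ``obstacle'' case $\mu(Q)>g-1$, $h^1(Q)\le3$, but its resolution is far simpler than the workarounds you propose. One shows directly that $\gamma(Q)\ge\gamma_2'$ still holds here, by a numerical estimate: since $h^0(K\otimes Q^\vee)=h^1(Q)\le3$, one has
\[
\gamma(Q)=\gamma(K\otimes Q^\vee)\ge\tfrac12\bigl(4g-4-d_Q\bigr)-1,
\]
and the upper bound $d_Q\le\frac{2d+2g}{3}$ (from \eqref{ms}) together with $d\le 3g-3$ and $g\ge 2\gamma_1+1$ gives
\[
\gamma(Q)\ge\frac{5g}{3}-\frac{d}{3}-3\ge\frac{2g}{3}-2\ge\frac{4\gamma_1-4}{3}.
\]
For $\gamma_1\ge4$ this already exceeds $\gamma_1\ge\gamma_2'$; for $\gamma_1=3$ one gets $\gamma(Q)\ge\tfrac83$, and since $\gamma(Q)\in\tfrac12\ZZ$ this forces $\gamma(Q)\ge3\ge\gamma_2'$. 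So this case folds back into the ones you already treated, yielding the first or second term of the minimum. There is no need for Nagata-type refinements or an alternative rank-$2$/rank-$1$ splitting.
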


\begin{proof}
By \eqref{ms} we have $d_{E/F} - 2 d_F \leq 2g$. Since $d_{E/F} + d_F = d$, this gives $d_F \geq \frac{d-2g}{3}$. 
The semistability of $E$ implies $d_F \leq \frac{d}{3}$. So altogether we get
\begin{equation} \label{deg}
\frac{d-2g}{3} \leq d_F \leq\frac{d}{3} \quad \mbox{and} \quad \frac{2d}{3} \leq d_{E/F} \leq \frac{2d+2g}{3}.
\end{equation}

If $h^0(F) \geq 2$, then $F$ contributes to $\gamma_1$ and
\begin{equation} \label{eq5.3}
\gamma(F) \geq \gamma_1.
\end{equation}

If $h^0(F) \leq 1$, then by definition of $\gamma$ and \eqref{deg} we get
\begin{equation} \label{eq5.4}
\gamma(F) \geq d_F \geq \frac{d-2g}{3}.
\end{equation} 

If $E/F$ is not semistable, it has a line subbundle $L$ of degree $> \frac{d}{3}$. Pulling back $L$ to $E$, we obtain a rank-2 
subbundle of $E$ with slope $> d_F$ contradicting the maximality of $\mu(F)$. So $E/F$ is semistable.

Suppose $d_{E/F} \leq 2g-2$. If $h^0(E/F) \geq 4$, then $E/F$ contributes to $\gamma'_2$ which gives
\begin{equation} \label{eq5.5}
\gamma(E/F) \geq \gamma'_2.
\end{equation}

If $h^0(E/F) \leq 3$, the definition of $\gamma$ and \eqref{deg} give 
\begin{equation} \label{eq5.6}
\gamma(E/F) \geq \frac{1}{2}(d_{E/F} -2) \geq \frac{d}{3} - 1.
\end{equation}

If $d_{E/F} > 2g-2$, apply the same argument to $(E/F)^* \otimes K$ to get
$$
\gamma(E/F) = \gamma((E/F)^* \otimes K) \geq \gamma'_2 
$$
if $h^1(E/F) \geq 4$
and, for $h^1(E/F) \leq 3$,
\begin{eqnarray*}
\gamma(E/F) & \geq & \frac{1}{2}(4g-4 -d_{E/F} - 2)\\
& \geq & 2g-3 - \frac{2d+2g}{6} = \frac{5g}{3} -\frac{d}{3} -3  \\
& \geq & \frac{2g}{3} -2 \geq \frac{4\gamma_1 + 2}{3} -2 = \frac{4\gamma_1 -4}{3}.
\end{eqnarray*}
Here we use the inequality $\gamma_1 \leq \frac{g-1}{2}$. Since $\gamma_1 \geq \gamma'_2$ 
and 
$\gamma(E/F)$ is a half-integer for $\gamma_1 = 3$, 
this implies \eqref{eq5.5}.

We have proved that, if $h^0(E/F) \geq 4$ or $d_{E/F} > 2g-2$, then \eqref{eq5.5} is valid. If $d_{E/F} \leq 2g-2$ and $h^1(E/F) \leq 3$, then
\eqref{eq5.6} is valid. 

Now note that
$$ 
\gamma(E) \geq \frac{\gamma(F) + 2 \gamma(E/F)}{3}.
$$
So, combining these results with \eqref{eq5.3} and \eqref{eq5.4} and noting that, if $h^0(F) < 2$, 
then $h^0(E/F) \geq 4$, we get the result.
\end{proof}

\begin{lem} \label{lemrk2}
Let the notations be as at the beginning of the section and suppose $F$ has rank $2$. Then
\begin{eqnarray*}
\gamma(E) & \geq & \min \left\{ \frac{\gamma_1 + 2 \gamma'_2}{3}, \frac{\gamma_1}{3} + \frac{2d -2g -6}{9}, 
\frac{2 \gamma'_2}{3} + \frac{d}{9}, \right. \\
&& \hspace*{1.9cm} \left. \frac{2 \gamma'_2}{3} + \frac{4g - d - 6}{9}, \frac{d+2g-12}{9} \right\}.
\end{eqnarray*}
\end{lem}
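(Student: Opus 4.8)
The plan is to follow the scheme of Lemma~\ref{lemrk1}, with the roles of the two ranks interchanged: now $F$ has rank~$2$ and $Q:=E/F$ is a line bundle. As in that lemma, maximality of $\mu(F)$ forces $F$ to be semistable (a line subbundle of $F$ of larger slope would be a proper subbundle of $E$ violating the maximality), and we may assume $F$ saturated so that $Q$ really is a line bundle. From \eqref{ms} we have $\mu(Q)-\mu(F)\le g$, i.e. $d_Q-\frac12 d_F\le g$; combined with $d_F+d_Q=d$ and the semistability of $E$ (so $\frac12 d_F\le\frac13 d$) this gives
\[
\frac{2d-2g}{3}\le d_F\le\frac{2d}{3},\qquad \frac{d}{3}\le d_Q\le\frac{d+2g}{3}.
\]
In particular $\mu(F)=\frac12 d_F\le\frac13 d\le g-1$, and $d_Q\le\frac{d+2g}{3}<2g-2$ (using $d\le3g-3$, $g\ge7$). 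Finally $h^0(E)\le h^0(F)+h^0(Q)$ gives, exactly as before,
\[
\gamma(E)\ge\frac{2\gamma(F)+\gamma(Q)}{3},
\]
so everything reduces to lower bounds for $\gamma(F)$ and $\gamma(Q)$ separately.

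For $F$: if $h^0(F)\ge4$, then $F$ is a semistable rank-$2$ bundle with $\mu(F)\le g-1$ and $h^0(F)\ge4$, so $F$ contributes to $\gamma'_2$ and $\gamma(F)\ge\gamma'_2$; if $h^0(F)\le3$, then straight from the definition $\gamma(F)=\mu(F)-h^0(F)+2\ge\frac12 d_F-1\ge\frac13(d-g-3)$.

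For the line bundle $Q$, with $\frac{d}{3}\le d_Q\le\frac{d+2g}{3}<2g-2$, I would distinguish cases according to whether $Q$ and $K\otimes Q^{-1}$ are special, using the symmetry $\gamma(Q)=\gamma(K\otimes Q^{-1})$ (as in Lemma~\ref{lemrk1}) together with $\deg(K\otimes Q^{-1})=2g-2-d_Q$. If $h^0(Q)\le1$, then $\gamma(Q)\ge d_Q\ge\frac{d}{3}$. If $h^1(Q)\le1$, then $\gamma(Q)=\gamma(K\otimes Q^{-1})\ge 2g-2-d_Q\ge\frac13(4g-d-6)$. If $h^0(Q)\ge2$ and $h^1(Q)\ge2$, then whichever of $Q$ and $K\otimes Q^{-1}$ has degree $\le g-1$ contributes to $\gamma_1$, so $\gamma(Q)\ge\gamma_1$. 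In particular, whenever $h^0(Q)\ge2$ one has $\gamma(Q)\ge\min\{\gamma_1,\frac13(4g-d-6)\}$.

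To finish I would combine these, using $h^0(E)=3+s$ with $s\ge3$ (hence $h^0(E)\ge6$) to rule out the incompatible combination $h^0(F)\le3$ together with $h^0(Q)\le1$ (which would force $h^0(E)\le4$); note also that $h^0(F)\le3$ forces $h^0(Q)\ge h^0(E)-h^0(F)\ge s\ge2$. Substituting the surviving combinations into $\gamma(E)\ge\frac13(2\gamma(F)+\gamma(Q))$ yields exactly the five terms in the statement: $(h^0(F)\ge4,\ \gamma(Q)\ge\gamma_1)\mapsto\frac13(\gamma_1+2\gamma'_2)$; $(h^0(F)\ge4,\ h^0(Q)\le1)\mapsto\frac{2\gamma'_2}{3}+\frac{d}{9}$; $(h^0(F)\ge4,\ h^1(Q)\le1)\mapsto\frac{2\gamma'_2}{3}+\frac{4g-d-6}{9}$; $(h^0(F)\le3,\ \gamma(Q)\ge\gamma_1)\mapsto\frac{\gamma_1}{3}+\frac{2d-2g-6}{9}$; and $(h^0(F)\le3,\ h^1(Q)\le1)\mapsto\frac{d+2g-12}{9}$. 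The only real work I anticipate is this bookkeeping of which cohomological hypotheses on $F$ and on $Q$ can co-occur, plus checking that the boundary case $d_F=2g-2$ (forcing $d=3g-3$ and $d_Q=g-1$) causes no trouble, since there $\mu(F)=g-1$ and $d_Q=g-1$ are both admissible; because $d_Q<2g-2$ throughout, all the Serre-dual estimates for $Q$ are unconditional, so no genuinely new difficulty arises beyond Lemma~\ref{lemrk1}.
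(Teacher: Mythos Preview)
Your proposal is correct and follows essentially the same approach as the paper: both arguments derive the degree bounds \eqref{eq5.7} from \eqref{ms} and semistability, bound $\gamma(F)$ according to whether $h^0(F)\ge4$ or $\le3$, bound $\gamma(Q)$ via $\gamma_1$ or the trivial estimates on $Q$ and $K\otimes Q^{-1}$, note that $h^0(F)\le3$ forces $h^0(Q)\ge3$, and then combine via $\gamma(E)\ge\frac{2\gamma(F)+\gamma(Q)}{3}$. The only cosmetic difference is that the paper organises the case analysis for $Q$ by first splitting on whether $d_Q\le g-1$, whereas you split directly on the sizes of $h^0(Q)$ and $h^1(Q)$; the resulting five bounds are identical.
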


\begin{proof}
For $F$ of rank 2, inequality \eqref{ms} says $2d_{E/F} - d_F \leq 2g$. Since $d_{E/F} + d_F = d$ and by semistability 
of $E$, $d_{F} \leq \frac{2d}{3}$, we get
\begin{equation} \label{eq5.7}
\frac{2d-2g}{3} \leq d_F \leq \frac{2d}{3} \quad \mbox{and} \quad \frac{d}{3} \leq d_{E/F} \leq \frac{2g +d}{3}. 
\end{equation}
Certainly $F$ is semistable. So if $h^0(F) \geq 4$, then $F$ contributes to $\gamma'_2$, which gives
\begin{equation*} \label{eq5.8}
\gamma(F) \geq \gamma'_2.
\end{equation*}
If $h^0(F) \leq 3$, then by definition of $\gamma$ and \eqref{eq5.7} we get 
\begin{equation*} \label{eq5.9}
\gamma(F) \geq \frac{1}{2}(d_F -2) \geq \frac{1}{2} \left( \frac{2d-2g}{3} -2 \right) = \frac{d}{3} - \frac{g}{3} -1.
\end{equation*}
If $d_{E/F} \leq g-1$ and $h^0(E/F) \geq 2$, then $E/F$ contributes to $\gamma_1$, which gives
\begin{equation*} \label{eq5.10}
\gamma(E/F) \geq \gamma_1.
\end{equation*}
If $d_{E/F} \leq g-1$ and $h^0(E/F) \leq 1$, then by definition of $\gamma$ and \eqref{eq5.7},
\begin{equation*} \label{eq5.11}
\gamma(E/F) \geq d_{E/F} \geq \frac{d}{3}.
\end{equation*}
If $d_{E/F} > g-1$, we apply the same argument to $(E/F)^* \otimes K$ to give:
$$
\gamma(E/F) = \gamma((E/F)^* \otimes K) \geq \gamma_1 
$$ 
if $h^1(E/F) \geq 2$ and, if $h^1(E/F) \leq 1$,
\begin{eqnarray*}
\gamma(E/F) = \gamma((E/F)^* \otimes K & \geq & d_{(E/F)^* \otimes K} = -d_{E/F} + 2g-2\\
& \geq &2g-2 - \frac{2g + d}{3} = \frac{4g}{3} - \frac{d}{3} -2. \\
\end{eqnarray*}
Now we have
$$
\gamma(E) \geq \frac{2\gamma(F) + \gamma(E/F)}{3}.
$$
Note that if $h^0(F) \leq 3$, then $h^0(E/F) \geq 3$. But
it could still happen in this case that $h^1(E/F) \leq 1$. 
So combining the above results, we obtain the lemma.  
\end{proof}

Combining Lemmas \ref{lemrk1} and \ref{lemrk2} we get

\begin{prop} \label{prop5.3}
$$
\gamma'_3 \geq \min \left\{ \frac{2\gamma_1+1}{3}, \frac{2\gamma'_2}{3} + \frac{d-2g}{9}, \frac{\gamma_1}{3} + \frac{2d-2g-6}{9}, 
\frac{d+2g-12}{9} \right\}.
$$
\end{prop}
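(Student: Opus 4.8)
The plan is to run a case split on the rank of a maximal-slope proper subbundle of $E$ and then simplify the resulting minimum. Let $E$ be a semistable bundle of degree $d$ with $\gamma(E)=\gamma'_3$ and let $F\subset E$ be a proper subbundle of maximal slope, as fixed at the beginning of the section. Since $\rk E=3$ we have $\rk F\in\{1,2\}$. If $\rk F=1$, Lemma \ref{lemrk1} gives
$$\gamma(E)\ \ge\ \min\left\{\frac{\gamma_1+2\gamma'_2}{3},\ \frac{2\gamma'_2}{3}+\frac{d-2g}{9},\ \frac{\gamma_1}{3}+\frac{2d-6}{9}\right\},$$
and if $\rk F=2$, Lemma \ref{lemrk2} gives
$$\gamma(E)\ \ge\ \min\left\{\frac{\gamma_1+2\gamma'_2}{3},\ \frac{\gamma_1}{3}+\frac{2d-2g-6}{9},\ \frac{2\gamma'_2}{3}+\frac{d}{9},\ \frac{2\gamma'_2}{3}+\frac{4g-d-6}{9},\ \frac{d+2g-12}{9}\right\}.$$
Hence it suffices to show that every term occurring in one of these two minima is bounded below by one of the four terms in the statement; since $\gamma'_3=\gamma(E)$ and one of the two cases holds, the proposition then follows.

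Three terms — $\frac{2\gamma'_2}{3}+\frac{d-2g}{9}$, $\frac{\gamma_1}{3}+\frac{2d-2g-6}{9}$ and $\frac{d+2g-12}{9}$ — are already among the four, so nothing is needed. Three more follow by monotonicity: from $g\ge0$ we get $\frac{\gamma_1}{3}+\frac{2d-6}{9}\ge\frac{\gamma_1}{3}+\frac{2d-2g-6}{9}$ and $\frac{2\gamma'_2}{3}+\frac{d}{9}\ge\frac{2\gamma'_2}{3}+\frac{d-2g}{9}$, and from $d\le3g-3$ (valid because $E$ computes $\gamma'_3$) we get $\frac{2\gamma'_2}{3}+\frac{4g-d-6}{9}\ge\frac{2\gamma'_2}{3}+\frac{d-2g}{9}$.

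The one term that requires a genuine input, and the step I expect to be the main (if modest) obstacle, is $\frac{\gamma_1+2\gamma'_2}{3}$; I would compare it with the first term $\frac{2\gamma_1+1}{3}$. This reduces to the inequality $2\gamma'_2\ge\gamma_1+1$, which is not automatic because in general $\gamma'_2<\gamma_1$ is possible. To get past this I would use the lower bound $\gamma'_2\ge\frac23\gamma_1$ proved in \cite{cl}; together with the standing assumption $\gamma_1\ge3$ this yields
$$\frac{\gamma_1+2\gamma'_2}{3}\ \ge\ \frac{\gamma_1}{3}+\frac{4\gamma_1}{9}\ =\ \frac{7\gamma_1}{9}\ \ge\ \frac{2\gamma_1+1}{3},$$
the last inequality again using $\gamma_1\ge3$. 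With this final comparison in hand, the term-by-term check is complete in both cases, and the proposition is proved.
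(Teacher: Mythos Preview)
Your argument follows the paper's route exactly: combine Lemmas~\ref{lemrk1} and~\ref{lemrk2}, drop the trivially dominated terms $\frac{\gamma_1}{3}+\frac{2d-6}{9}$ and $\frac{2\gamma'_2}{3}+\frac{d}{9}$, eliminate $\frac{2\gamma'_2}{3}+\frac{4g-d-6}{9}$ via $d\le 3g-3$, and then compare $\frac{\gamma_1+2\gamma'_2}{3}$ with $\frac{2\gamma_1+1}{3}$.

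The only divergence is in that last comparison, and there your justification is shaky. You invoke a bound ``$\gamma'_2\ge\frac23\gamma_1$ proved in \cite{cl}'', but no such statement is cited anywhere in this paper, and I do not believe it appears in \cite{cl}. What the paper actually uses is \cite[Theorem~5.2]{cl}, namely $\gamma'_2\ge\min\{\gamma_1,\tfrac{d_4}{2}-2\}$: if $\gamma'_2=\gamma_1$ the inequality is immediate, and otherwise $2\gamma'_2\ge d_4-4\ge(\gamma_1+5)-4=\gamma_1+1$ using $d_4\ge d_1+3$. This gives $2\gamma'_2\ge\gamma_1+1$ directly, which is precisely the inequality $\frac{\gamma_1+2\gamma'_2}{3}\ge\frac{2\gamma_1+1}{3}$ you need. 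So your strategy is correct, but the cited input should be replaced by the $d_4$ argument; with that change your proof coincides with the paper's.
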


\begin{proof}
Let $E$ be a semistable bundle of rank 3 and degree $d$ computing $\gamma'_3$.
Combining Lemmas \ref{lemrk1} and \ref{lemrk2} we see that $\gamma(E)$ is greater than or equal to the minimum of the 5 numbers
$$
\frac{\gamma_1 + 2 \gamma'_2}{3}, \frac{2 \gamma'_2}{3} + \frac{d-2g}{9}, 
\frac{\gamma_1}{3} + \frac{2d-2g-6}{9}, \frac{2\gamma'_2}{3} + \frac{4g -d -6}{9}, 
\frac{d+2g-12}{9}.
$$ 
Now, since $d \leq 3g-3$,
$$
\frac{2\gamma'_2}{3} + \frac{4g-d-6}{9} \geq \frac{2\gamma'_2}{3} + \frac{g-3}{9} \geq \frac{2\gamma'_2}{3} + \frac{d-2g}{9}.
$$
Moreover, by \cite[Theorem 5.2]{cl}, $\gamma'_2 \geq \min \{ \gamma_1, \frac{d_4}{2} - 2 \}$. If $\gamma'_2 < \gamma_1$, 
then $\gamma'_2 \geq \frac{d_4}{2} - 2$. Now $d_4 \geq d_1 + 3 \geq \gamma_1 + 5$. So
$2\gamma'_2 \geq d_4 - 4 \geq \gamma_1 + 1$. So
$$
\frac{\gamma_1 + 2 \gamma'_2}{3} \geq \frac{2 \gamma_1 + 1}{3} .
$$ 
This inequality holds also if $\gamma'_2 = \gamma_1$. The result follows.
\end{proof}

\section{The main theorem}

\begin{theorem} \label{thm3.4}
Suppose $\gamma_1 \geq 3$. Then
$$
\gamma'_3 \geq \min \left\{ \frac{d_9}{3} - 2, \frac{2\gamma'_2+1}{3} \right\}.
$$
\end{theorem}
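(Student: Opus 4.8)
The plan is to combine the two lower bounds already established, Proposition~\ref{prop4.1} (the first method) and Proposition~\ref{prop5.3} (the second method), by splitting on the degree $d$ of a semistable rank-$3$ bundle $E$ computing $\gamma'_3$; recall that such an $E$ satisfies $d\le 3g-3$. Apart from these two propositions the only facts needed are the elementary inequalities $2\le\gamma'_2\le\gamma_1\le\frac{g-1}{2}$ (the lower bound $\gamma'_2\ge2$ follows from \cite[Theorem~5.2]{cl} and $d_4\ge\gamma_1+5\ge8$; the bound $\gamma'_2\le\gamma_1$ is elementary and is used e.g. in the proof of Lemma~\ref{lemrk1}; and $\gamma_1\le\frac{g-1}{2}$ is classical) together with the observation that $\gamma'_3=\gamma(E)\in\frac13\ZZ$.

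Suppose first that $d\le 2g+3$. Then Proposition~\ref{prop4.1} bounds $\gamma'_3=\gamma(E)$ below by the minimum of $\frac{d_9}{3}-2$, $\gamma'_2$, $\frac{2\gamma_1+1}{3}$ and $\frac13(2\gamma_1+2g-d+4)$. The first is one of the two quantities in the desired bound, so it suffices to see that each of the other three is $\ge\frac{2\gamma'_2+1}{3}$: indeed $\gamma'_2\ge\frac{2\gamma'_2+1}{3}$ since $\gamma'_2\ge1$, $\frac{2\gamma_1+1}{3}\ge\frac{2\gamma'_2+1}{3}$ since $\gamma_1\ge\gamma'_2$, and $\frac13(2\gamma_1+2g-d+4)\ge\frac{2\gamma_1+1}{3}\ge\frac{2\gamma'_2+1}{3}$ since $d\le 2g+3$. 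This yields the theorem when $d\le 2g+3$.

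Now suppose $d\ge 2g+4$. Then Proposition~\ref{prop5.3} bounds $\gamma'_3$ below by the minimum of $\frac{2\gamma_1+1}{3}$, $\frac{2\gamma'_2}{3}+\frac{d-2g}{9}$, $\frac{\gamma_1}{3}+\frac{2d-2g-6}{9}$ and $\frac{d+2g-12}{9}$. The first is $\ge\frac{2\gamma'_2+1}{3}$ as above; the second is $>\frac{2\gamma'_2}{3}+\frac13=\frac{2\gamma'_2+1}{3}$ because $d-2g\ge4$; and for the third, inserting $d\ge 2g+4$ and using $\gamma_1\ge\gamma'_2$ and $\gamma'_2\le\frac{g-1}{2}$ reduces the required inequality to $g\ge-1$. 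Finally $\frac{d+2g-12}{9}\ge\frac{4g-8}{9}$, and $\frac{4g-8}{9}\ge\frac{2\gamma'_2+1}{3}$ whenever $g\ge 8$ (again using $\gamma'_2\le\frac{g-1}{2}$). When $g=7$ one has $\gamma_1=\gamma'_2=3$, and since then $2g+4=3g-3$ the only possibility is $d=18$; Proposition~\ref{prop5.3} gives only $\gamma'_3\ge\frac{20}{9}$, but $\gamma'_3\in\frac13\ZZ$ forces $\gamma'_3\ge\frac73=\frac{2\gamma'_2+1}{3}$, while $\frac{d_9}{3}-2\ge3$, so the asserted bound holds in this case as well. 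This completes the range $d\ge 2g+4$ and hence the proof.

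I expect the fourth term of Proposition~\ref{prop5.3} in the large-degree case to be the only genuine obstacle: unlike the others it cannot be compared with $\frac{2\gamma'_2+1}{3}$ by a manipulation uniform in the genus, and it forces one to bring in both $d\le 3g-3$ and $\gamma'_2\le\frac{g-1}{2}$, and in the boundary genus $g=7$ the integrality $\gamma'_3\in\frac13\ZZ$. Everything else is routine arithmetic.
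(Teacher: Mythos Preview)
Your proof is correct and follows essentially the same approach as the paper: split on whether $d\le 2g+3$ or $d\ge 2g+4$ and apply Proposition~\ref{prop4.1} or Proposition~\ref{prop5.3} respectively, then verify the residual terms dominate $(2\gamma'_2+1)/3$. The only cosmetic difference is that for the fourth term $\frac{d+2g-12}{9}$ the paper splits on $\gamma_1\ge4$ versus $\gamma_1=3$ (and then on $d$), whereas you split on $g\ge8$ versus $g=7$; both routes isolate the same boundary case $(g,\gamma_1,d)=(7,3,18)$ and dispose of it by the integrality argument $3\gamma'_3\in\ZZ$.
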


\begin{proof}
Suppose that $E$ is a semistable bundle of rank 3 and degree $d$ computing $\gamma'_3$.
If $d \leq 2g+3$, the result follows from Proposition \ref{prop4.1}.

If $d \geq 2g + 4$, we use Proposition \ref{prop5.3} and show that the numbers occurring in the statement are 
$\geq \frac{2\gamma'_2+1}{3}$. For the first and second ones this is obvious and we have, using $\gamma_1 \leq \frac{g-1}{2}$,
$$
\frac{\gamma_1}{3} + \frac{2d-2g-6}{9} \geq \frac{\gamma_1}{3} + \frac{2g+2}{9} \geq  
\frac{\gamma_1}{3} + \frac{4\gamma_1 + 4}{9} \ > \frac{2\gamma_1}{3} \geq \frac{2\gamma'_2}{3}. 
$$

Also, if $\gamma_1 \geq 4$,
$$ 
\frac{d+2g-12}{9} \geq \frac{4g-8}{9} \geq \frac{8\gamma_1-4}{9} \geq \frac{2\gamma_1+1}{3} \geq \frac{2\gamma'_2+1}{3}.
$$
If $\gamma_1 =3$ and $d \geq 2g + 5$,
$$ 
\frac{d+2g-12}{9} \geq \frac{4g-7}{9} \geq \frac{8\gamma_1-3}{9} = \frac{2\gamma_1+1}{3} \geq \frac{2\gamma'_2+1}{3}.
$$
If $\gamma_1 = 3$ and $d = 2g+4$, then $\frac{d + 2g -12}{9} = \frac{4g - 8}{9}$, while
$\frac{2 \gamma_1 + 1}{3} = \frac{7}{3}$. For $g = 7$, this does not give the required inequality. However in this case we get 
$\gamma(E) \geq \frac{4g-8}{9} = \frac{20}{9}$. Hence $\gamma(E) \geq \frac{7}{3}$, since $3\gamma(E)$ is an integer.
\end{proof}

\begin{rem} \label{rem4.2}
{\em Theorem \ref{thm3.4} is trivially true for $\gamma_1 = 1$ or 2, since then $\gamma'_2 = \gamma'_3 = \gamma_1$. 
For $\gamma_1 = 0$ the result is false. For $\gamma_1 = 2$, the inequality is strict.
}
\end{rem} 

\begin{cor} \label{corr3.6}
Suppose $\frac{d_2}{2} \geq \frac{d_3}{3}$. Then
$$
\gamma_3 \geq \min \left\{ \frac{2\gamma_1 + 1}{3}, \frac{d_3-2}{3} \right\}. 
$$
\end{cor}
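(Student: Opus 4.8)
The plan is to let $E$ be a semistable bundle of rank $3$ and degree $d$ computing $\gamma_3$, write $h^0(E)=3+s$ with $s\ge1$, and argue according to the size of $s$. Throughout I use the standing hypotheses $\gamma_1\ge3$, $g\ge7$, $\gamma_1\le\frac{g-1}2$, together with the stated facts $d_r<d_{r+1}$ and $d_r\ge\min\{\gamma_1+2r,\,g+r-1\}$.

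If $s\ge3$, then $E$ also contributes to $\gamma'_3$, so $\gamma_3=\gamma(E)\ge\gamma'_3$; since $\gamma_3\le\gamma'_3$ is immediate from the definitions, $\gamma_3=\gamma'_3$ and Theorem \ref{thm3.4} applies. It then suffices to check that $\frac{d_9}3-2$ and $\frac{2\gamma'_2+1}3$ are both $\ge\min\{\frac{2\gamma_1+1}3,\frac{d_3-2}3\}$: the first because $d_9\ge d_3+6$, and the second using $\gamma'_2\ge\min\{\gamma_1,\frac{d_4}2-2\}$ (\cite[Theorem 5.2]{cl}) together with $d_4\ge d_3+1$ and $\gamma_1\ge\frac{2\gamma_1+1}3$.

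So suppose $s\in\{1,2\}$. Here I split into three exhaustive cases according to which special subbundles $E$ possesses. (i) If $E$ has no proper subbundle $F$ with $h^0(F)>r_F$, the Lemma of Paranjape--Ramanan gives $d\ge d_{3s}$, and a direct estimate (using $d_6\ge d_3+3$ when $s=2$) gives $\gamma(E)\ge\frac{d_3-2}3$. (ii) If $E$ has a line subbundle $L$ with $h^0(L)\ge2$, then $d_L\ge d_1\ge\gamma_1+2$, and semistability gives $d\ge3d_L\ge3\gamma_1+6$, whence $\gamma(E)>\gamma_1\ge\frac{2\gamma_1+1}3$. (iii) Otherwise $E$ has a rank-$2$ subbundle $F$ with $h^0(F)\ge3$ but no line subbundle with $h^0\ge2$; write $h^0(F)=2+t$. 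Generic generation of $E$ (\cite[Theorem 3.3]{cl1}, cf. the introduction) forces $h^0(F)\le h^0(E)-1$, so $1\le t\le s$. If $t=2$ (hence $s=2$), then $\frac t2>\frac s3$ and \cite[Lemma 2.1]{cl1} gives $\gamma(E)>\gamma'_2\ge\min\{\frac{2\gamma_1+1}3,\frac{d_3-2}3\}$ as above. If $t=1$, I apply Paranjape--Ramanan to $F$ to get $d_F\ge d_2$; combined with semistability, which gives $d_{E/F}\ge\frac d3$, this yields $d\ge\frac{3d_2}2$. When $s=1$ the hypothesis $\frac{d_2}2\ge\frac{d_3}3$ then gives $\gamma(E)=\frac{d-2}3\ge\frac{d_2}2-\frac23\ge\frac{d_3-2}3$; when $s=2$ the quotient line bundle $E/F$ satisfies $h^0(E/F)\ge2$, so (after dualising by $K$ if $d_{E/F}>g-1$) it contributes to $\gamma_1$, forcing $d_{E/F}\ge\gamma_1+2$ and hence $d\ge d_2+\gamma_1+2\ge2\gamma_1+6$ (using $d_2\ge\gamma_1+4$), so $\gamma(E)\ge\frac{2\gamma_1+2}3>\frac{2\gamma_1+1}3$.

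The crux is the low-$h^0$ analysis in case (iii): bundles computing $\gamma_3$ with only $h^0(E)=4$ are not reached by Theorem \ref{thm3.4}, and when $E$ is an extension of a line bundle by a rank-$2$ net (the case $s=1$, $t=1$) neither the Paranjape--Ramanan bound nor semistability alone is enough — this is exactly where the hypothesis $\frac{d_2}2\ge\frac{d_3}3$ is used. I would also need to check carefully that cases (i)--(iii) are genuinely exhaustive, that the Paranjape--Ramanan hypothesis passes from $E$ to the rank-$2$ subbundle $F$ (a saturated line subbundle of $F$ with $h^0\ge2$ saturates to one of $E$), and to dispose of the dualisation subtleties in the $s=2$ branch of case (iii) when $d_{E/F}>g-1$.
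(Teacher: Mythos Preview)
Your argument is correct, but it takes a genuinely different route from the paper. The paper's proof is a two-line application of a result from \cite{cl}: by \cite[Theorem~6.1]{cl} one has $\gamma_3=\min\{\gamma'_3,\frac{d_3-2}{3}\}$ (this is where the hypothesis $\frac{d_2}{2}\ge\frac{d_3}{3}$ enters), and then Theorem~\ref{thm3.4} together with $d_9\ge d_3+6$ and $d_4\ge d_3+1$ handles $\gamma'_3$. You instead bypass \cite[Theorem~6.1]{cl} entirely and argue directly on a bundle $E$ computing $\gamma_3$, treating the low-$h^0$ cases $s\in\{1,2\}$ by hand via the Paranjape--Ramanan lemma and subbundle analysis. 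In effect you are reproving the relevant half of \cite[Theorem~6.1]{cl} in the special situations that arise; this makes your argument more self-contained but considerably longer. Two remarks on your case~(iii): first, you should saturate $F$ at the outset so that $E/F$ is genuinely a line bundle and the exact-sequence bound $h^0(E/F)\ge h^0(E)-h^0(F)$ applies; second, in the $s=2$, $t=1$ branch your dualisation claim is not literally correct when $h^1(E/F)\le1$, but since $d_{E/F}>g-1\ge2\gamma_1>\gamma_1+2$ in that range the desired inequality $d_{E/F}\ge\gamma_1+2$ holds anyway, so the conclusion survives.
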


\begin{proof}
By \cite[Theorem 6.1]{cl} we have $\gamma_3 = \min \{ \gamma'_3, \frac{d_3-2}{3} \}$. According to \cite[Theorem 5.2]{cl}, 
$\gamma'_2 \geq \min \{ \gamma_1, \frac{d_4}{2} - 2 \}$. Now by the theorem and using $\frac{d_9}{3} -2 \geq \frac{d_3}{3} > 
\frac{d_3 -2}{3}$ we get the result, noting that $\frac{d_4-3}{3} \geq \frac{d_3 -2}{3}$. 
\end{proof}

\begin{rem} \label{re3.7}
{\em If in addition $d_3 \neq 3d_1$, then there always exist semistable bundles $E$ of rank 3 contributing to $\gamma_3$ 
with $\gamma(E) = \frac{d_3 -2}{3}$ (see \cite[Theorem 4.15 (a)]{cl}).
If $d_3 = 3d_1$, there exist 
semistable bundles $E$ of rank 3 and degree $d_3$ which contribute to $\gamma'_3$; hence $\gamma'_3 < \frac{d_3-2}{3}$.   
}
\end{rem}

\begin{prop} \label{pr3.8}
For a general curve of genus $g \geq 7$,
$$
\gamma_3 = \frac{d_3-2}{3} = \frac{1}{3}\left( g+1 - \left[ \frac{g}{4} \right] \right)
$$
and this value is attained by the bundles $E_L$ given by
$$
0 \ra E_L^* \ra H^0(L) \otimes \cO_C \ra L \ra 0,
$$
where $L$ is a line bundle of degree $d_3$ with $h^0(L) = 4$.
\end{prop}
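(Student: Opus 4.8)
The plan is to pin down the gonality sequence of a general curve from Brill--Noether theory, feed it into Corollary \ref{corr3.6} for the lower bound, and combine this with the identity $\gamma_3=\min\{\gamma'_3,\frac{d_3-2}{3}\}$ of \cite[Theorem 6.1]{cl} for the upper bound; a separate argument then identifies the bundles attaining the value.

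First I would record that, for a general curve of genus $g$, a line bundle of degree $d$ with $h^0\ge r+1$ exists precisely when the Brill--Noether number $g-(r+1)(g-d+r)$ is non-negative, so
$$d_r=g+r-\left[\frac{g}{r+1}\right].$$
In particular $d_1=g+1-[g/2]$, $d_2=g+2-[g/3]$, $d_3=g+3-[g/4]$ and $\gamma_1=d_1-2=[(g-1)/2]$, which is $\ge3$ because $g\ge7$; hence the standing hypotheses of the paper hold, and $\frac{d_3-2}{3}=\frac13\big(g+1-[g/4]\big)$ is the claimed value.

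Next I would check the hypothesis $\frac{d_2}{2}\ge\frac{d_3}{3}$ of Corollary \ref{corr3.6}: indeed $3d_2-2d_3=g-3[g/3]+2[g/4]\ge2[g/4]>0$. The corollary then gives $\gamma_3\ge\min\{\frac{2\gamma_1+1}{3},\frac{d_3-2}{3}\}$, and I would show the first term dominates the second, i.e.\ $2\gamma_1+3\ge d_3$, equivalently $[g/4]\ge g-2[(g-1)/2]$; the right side is $2$ for $g$ even and $1$ for $g$ odd, while $[g/4]\ge2$ for even $g\ge8$ and $[g/4]\ge1$ for odd $g\ge7$ (with equality when $g=7$). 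Thus $\gamma_3\ge\frac{d_3-2}{3}$, and since $\gamma_3=\min\{\gamma'_3,\frac{d_3-2}{3}\}\le\frac{d_3-2}{3}$ by \cite[Theorem 6.1]{cl}, we obtain $\gamma_3=\frac{d_3-2}{3}=\frac13\big(g+1-[g/4]\big)$.

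It remains to identify the bundles computing $\gamma_3$. A short computation gives $3d_1-d_3=\frac g2+[g/4]$ for $g$ even and $\frac{g+3}{2}+[g/4]$ for $g$ odd, so $d_3\ne3d_1$; hence the hypotheses of Remark \ref{re3.7} are met and \cite[Theorem 4.15(a)]{cl} supplies a semistable rank-$3$ bundle of degree $d_3$ contributing to $\gamma_3$ with Clifford index $\frac{d_3-2}{3}$, namely $E_L$ with $L$ a line bundle of degree $d_3$ and $h^0(L)=4$; as $\gamma_3=\frac{d_3-2}{3}=\gamma(E_L)$, this $E_L$ computes $\gamma_3$. I expect this final step to be the main obstacle: it requires knowing that, for a general such $L$ on a general curve, $L$ is globally generated and $E_L$ is semistable with $h^0(E_L)=4$ --- equivalently, via the dual span sequence $0\to L^*\to H^0(L)^*\otimes\cO_C\to E_L\to0$ (which already forces $h^0(E_L)\ge h^0(L)=4$), that the multiplication map $H^0(L)\otimes H^0(K)\to H^0(K\otimes L)$ has maximal rank. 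This Brill--Noether--generic input is the result borrowed from \cite{cl}; by comparison the floor-function bookkeeping in the earlier steps is routine, though it needs a glance at the boundary cases $g=7,8$.
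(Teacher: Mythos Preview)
Your proposal is correct and follows essentially the same route as the paper: invoke Corollary~\ref{corr3.6} for the lower bound, check via the Brill--Noether formula $d_r=g+r-[g/(r+1)]$ that $\tfrac{d_2}{2}\ge\tfrac{d_3}{3}$ and $\tfrac{2\gamma_1+1}{3}\ge\tfrac{d_3-2}{3}$, and use Remark~\ref{re3.7} (i.e.\ \cite[Theorem 4.15(a)]{cl}) together with $d_3\ne3d_1$ for the upper bound and the identification of the bundles $E_L$. The paper compresses all of this into the phrase ``a straightforward computation,'' so your only addition is spelling out the floor-function arithmetic and flagging that the semistability of $E_L$ is borrowed from \cite{cl}; your worry about that last step being the ``main obstacle'' is unfounded here, since it is precisely the cited input.
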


\begin{proof}
In view of Corollary \ref{corr3.6} and Remark \ref{re3.7} it is sufficient to show 
that $\frac{2\gamma_1+1}{3} \geq \frac{d_3 - 2}{3}$. 
This is a straightforward computation. 
\end{proof}

\begin{prop} \label{pr4.6}
For a smooth plane curve of degree $\delta \geq 7$ we have
$$
\gamma'_3 \geq \frac{2\delta - 7}{3}
$$
and 
$$
\gamma_3 = \frac{1}{3}\left( \left[ \frac{3\delta+1}{2} \right] - 2 \right) \quad \mbox{for} \quad \delta \geq 10.
$$
\end{prop}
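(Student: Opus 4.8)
\smallskip
\noindent\emph{Proof plan.} The plan is to derive both assertions from Theorem \ref{thm3.4} and \cite[Theorems 5.2 and 6.1]{cl}, once the relevant invariants of a smooth plane curve $C$ of degree $\delta$ are assembled. Recall that $g=\binom{\delta-1}{2}$, that the Clifford index is $\gamma_1=\delta-4$ (so the standing hypothesis $\gamma_1\ge 3$ becomes precisely $\delta\ge 7$ and Theorem \ref{thm3.4} is available), that the gonality is $d_1=\delta-1$, and that the gonality sequence satisfies
$$d_3=\left[\frac{3\delta+1}{2}\right]\qquad\text{and}\qquad d_4\ge 2\delta-4.$$
From the last inequality and \cite[Theorem 5.2]{cl}, $\gamma'_2\ge\min\{\gamma_1,\frac{d_4}{2}-2\}=\delta-4$; since $\gamma'_2\le\gamma_1$ always, $\gamma'_2=\delta-4$.

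For the bound on $\gamma'_3$ I would simply invoke Theorem \ref{thm3.4} and check that each of its two terms is $\ge\frac{2\delta-7}{3}$. The term $\frac{2\gamma'_2+1}{3}$ equals $\frac{2\delta-7}{3}$ exactly. For $\frac{d_9}{3}-2$, strict monotonicity of the gonality sequence gives $d_9\ge d_4+5\ge 2\delta+1$, hence $\frac{d_9}{3}-2\ge\frac{2\delta-5}{3}>\frac{2\delta-7}{3}$. Thus $\gamma'_3\ge\frac{2\delta-7}{3}$ for all $\delta\ge 7$.

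For the value of $\gamma_3$ when $\delta\ge 10$ I would use the formula $\gamma_3=\min\{\gamma'_3,\frac{d_3-2}{3}\}$ of \cite[Theorem 6.1]{cl}. A short computation, treating $\delta$ even and odd separately, shows that $d_3=[\frac{3\delta+1}{2}]\le 2\delta-5$ precisely when $\delta\ge 10$; together with the previous paragraph this gives $\frac{d_3-2}{3}\le\frac{2\delta-7}{3}\le\gamma'_3$, so the minimum is the second term and $\gamma_3=\frac{d_3-2}{3}=\frac{1}{3}\left(\left[\frac{3\delta+1}{2}\right]-2\right)$. Finally $d_3<3(\delta-1)=3d_1$ for $\delta\ge 10$, so by Remark \ref{re3.7} this value is in fact computed by a rank-$3$ bundle $E_L$ of Lazarsfeld--Mukai type.

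The only step that is not mechanical is the determination of the gonality sequence entries $d_3=[\frac{3\delta+1}{2}]$ and $d_4\ge 2\delta-4$. This rests on the classification of special linear series of small degree on a smooth plane curve --- each such series, after removing base points, is ``cut out by plane curves'', i.e.\ of the form $\cO_C(k)(-D)$ with $D$ effective --- combined with a dimension count bounding $h^0\big(\cO_C(k)(-D)\big)$ in terms of $\deg D$ and the configuration of $D$. This is where the geometry peculiar to plane curves enters, and is the part demanding the most care; everything after it is the arithmetic above, the one delicate point being the identification of $\delta\ge 10$ as exactly the range in which $d_3\le 2\delta-5$, so that $\frac{d_3-2}{3}$ (rather than $\gamma'_3$) governs $\gamma_3$.
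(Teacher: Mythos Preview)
Your argument for the lower bound on $\gamma'_3$ is correct and follows the same route as the paper, though the paper quotes $d_9=3\delta$ and $\gamma'_2=\gamma_1=\delta-4$ directly from \cite[Proposition~8.1]{cl} rather than deriving weaker bounds.

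There is, however, a real gap in your computation of $\gamma_3$. Your claim that $d_3=\left[\frac{3\delta+1}{2}\right]$ is incorrect: for a smooth plane curve of degree $\delta$ one has $d_2=\delta$ and $d_3=2\delta-2$ (the latter realised by $|\cO_C(2)(-p-q)|$). In particular
\[
\frac{d_2}{2}=\frac{\delta}{2}<\frac{2\delta-2}{3}=\frac{d_3}{3}\qquad(\delta\ge5),
\]
so the hypothesis under which \cite[Theorem~6.1]{cl} yields $\gamma_3=\min\{\gamma'_3,\frac{d_3-2}{3}\}$ (cf.\ Corollary~\ref{corr3.6}) fails, and that formula is not available here. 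The number $\left[\frac{3\delta+1}{2}\right]$ is not $d_3$ at all; it is $\left\lceil\frac{3d_2}{2}\right\rceil$, coming from semistable rank-$3$ bundles of degree $\left\lceil\frac{3d_2}{2}\right\rceil$ with $h^0=4$ built from the hyperplane pencil. The paper therefore does not go through \cite[Theorem~6.1]{cl} but instead invokes \cite[Proposition~8.3]{cl}, a result specific to plane curves, which gives directly
\[
\gamma_3=\min\left\{\gamma'_3,\ \tfrac{1}{3}\Big(\big[\tfrac{3\delta+1}{2}\big]-2\Big)\right\}.
\]
The closing arithmetic is then the same as yours. Your final appeal to Remark~\ref{re3.7} is likewise affected: the Lazarsfeld--Mukai bundle $E_L$ built from a $g^3_{d_3}$ has $\gamma(E_L)=\frac{2\delta-4}{3}$, which is strictly larger than the value claimed for $\gamma_3$.
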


\begin{proof}
For a smooth plane curve of degree $\delta$, we have $d_9 = 3 \delta$ and $\gamma'_2 = \gamma_1 = \delta-4$ by 
\cite[Proposition 8.1]{cl}. So Theorem \ref{thm3.4} implies 
$$
\gamma'_3 \geq \min \left\{ \delta -2, \frac{2\delta - 7}{3} \right\} = \frac{2\delta - 7}{3}.
$$
By \cite[Proposition 8.3]{cl},
$$
\gamma_3 = \min \left\{ \gamma'_3, \frac{1}{3}\left( \left[ \frac{3\delta+1}{2} \right] - 2 \right) \right\}.
$$
Now the result follows if we have
$$
\frac{1}{3} \left( \left[ \frac{3\delta + 1}{2} \right] - 2 \right) \leq \frac{2 \delta -7}{3},
$$
which is valid for $\delta \geq 10$.
\end{proof}

\begin{rem} \label{rem4.7}
{\em
For $7 \leq \delta \leq 9$ the estimate on $\gamma'_3$ is not sufficient to give the result for $\gamma_3$. For $\delta = 5$ 
(and similarly for $\delta = 6$) the result for $\gamma_3$ is false, since $\gamma_3 = \gamma'_3 = 1$ 
(see \cite[Proposition 2.6 and Theorem 3.6(e)]{cl}). 
}
\end{rem}

We are now in a position to justify the claim made in \cite[Remark 3.8]{cl1}.

\begin{cor} \label{cor4.8}
For a smooth plane curve of degree $\delta \geq 10$, $\gamma_3 > \gamma_2$. Moreover, there exists a non-generated 
bundle computing $\gamma_3$.
\end{cor}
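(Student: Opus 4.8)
The plan is to combine the value of $\gamma_3$ from Proposition \ref{pr4.6} with the known value of $\gamma_2$ for plane curves, and then to exhibit an explicit non-generated bundle of rank $3$ computing $\gamma_3$.

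First I would settle the inequality $\gamma_3>\gamma_2$. By Proposition \ref{pr4.6}, $\gamma_3=\frac13\left(\left[\frac{3\delta+1}{2}\right]-2\right)$. For a smooth plane curve of degree $\delta$ we have $d_2=\delta$ and $\gamma'_2=\gamma_1=\delta-4$ (\cite[Proposition 8.1]{cl}), so the formula $\gamma_2=\min\{\gamma'_2,\frac{d_2-2}{2}\}$ of \cite{cl} gives $\gamma_2=\frac{\delta-2}{2}$ as soon as $\delta\ge6$. It remains only to check $\frac13\left(\left[\frac{3\delta+1}{2}\right]-2\right)>\frac{\delta-2}{2}$, i.e. $\left[\frac{3\delta+1}{2}\right]>\frac{3\delta-2}{2}$, which is clear since $\left[\frac{3\delta+1}{2}\right]\ge\frac{3\delta}{2}>\frac{3\delta-2}{2}$.

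Next I would construct the bundle. Write $d_3=\left[\frac{3\delta+1}{2}\right]$, so that $d_3-\delta=\left[\frac{\delta+1}{2}\right]\ge5$ for $\delta\ge10$. Choose a semistable rank-$2$ bundle $F$ of degree $d_2=\delta$ with $h^0(F)=3$ (such $F$ exist by \cite{cl}, e.g. a suitable dual Lazarsfeld--Mukai bundle of $\cO_C(1)$); by the first step $\gamma(F)=\gamma_2$, so $F$ computes $\gamma_2$ and is therefore generated by \cite{cl1}. Choose a general line bundle $Q$ of degree $d_3-\delta$ with $h^0(Q)=1$; its unique section vanishes on an effective divisor $D_0$ of degree $d_3-\delta\ge2$, so $Q$ has base points and is not generated. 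Form an extension
$$0\lra F\lra E\lra Q\lra 0$$
whose class lies in the image of the map $\Ext^1(\cO_{D_0},F)\ra\Ext^1(Q,F)$ arising from $0\to\cO_C\to Q\to\cO_{D_0}\to0$ and is non-zero; such a class exists because a dimension count (using $h^0(F\otimes Q^*)=0$ for general $Q$ and $h^0(F)=3$) shows this image has dimension $2(d_3-\delta)-3\ge1$. This choice makes the connecting map $H^0(Q)\ra H^1(F)$ vanish, so $h^0(E)=h^0(F)+h^0(Q)=4$; since $E$ has rank $3$ and degree $d_3$,
$$\gamma(E)=\frac13\bigl(2\gamma(F)+\gamma(Q)\bigr)=\frac13\bigl(2\gamma_2+(d_3-\delta)\bigr)=\frac{d_3-2}{3}=\gamma_3,$$
and $\mu(E)=\frac{d_3}{3}\le g-1$. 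So $E$ contributes to, and therefore computes, $\gamma_3$.

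Finally I would check that $E$ is semistable but not generated. If $\delta$ is even then $\mu(F)=\mu(Q)=\mu(E)$, and an extension of semistable bundles of equal slope is semistable. If $\delta$ is odd then $\mu(F)<\mu(E)<\mu(Q)$ with $\mu(E)-\mu(F)=\frac16$, and $F$ is stable; a short analysis of the possible destabilising subsheaves (sub-line-bundles of $F$ have degree $\le\frac{\delta-1}{2}$, and any rank-$1$ subsheaf of $E$ meeting $F$ in rank $0$ embeds into $Q$) shows that the only candidate would be a splitting $Q\hra E$, which the non-splitness of the extension excludes. For non-generation: at a base point $p$ of $Q$, every section of $E$ maps in $Q$ to a section vanishing at $p$, hence takes its value at $p$ in $F_p$; since $F$ is generated, the image of $H^0(E)$ in $E_p$ is exactly $F_p$, of dimension $2<3$, so $E$ is not generated. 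Together with $\gamma_3>\gamma_2$ this proves the corollary. I expect the main obstacle to be the odd-degree case of the semistability check, where the numerical margin is only $\frac16$: one must exclude destabilising subsheaves carefully while simultaneously ensuring the extension class can be taken both non-split and inside the subspace that keeps $h^0(E)=4$.
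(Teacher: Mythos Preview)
Your verification of $\gamma_3>\gamma_2$ is exactly what the paper does: compute both from the known formulae and compare. One small point: the quantity $\left[\frac{3\delta+1}{2}\right]$ is \emph{not} $d_3$ for smooth plane curves (in fact $d_3=2\delta-2$, so the hypothesis $\frac{d_2}{2}\ge\frac{d_3}{3}$ of Corollary~\ref{corr3.6} fails for $\delta\ge5$, which is precisely why Proposition~\ref{pr4.6} invokes \cite[Proposition~8.3]{cl} rather than Corollary~\ref{corr3.6}). This is only a notational slip; your construction uses $\left[\frac{3\delta+1}{2}\right]$ merely as the target degree of $E$, so nothing is affected.

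For the existence of a non-generated bundle the paper takes a different, much shorter route: having established $\gamma_3>\gamma_2$, it simply invokes \cite[Proposition~3.5 and Remark~3.6]{cl1}, where the relevant extension construction is carried out in general (with the inequality $\gamma_3>\gamma_2$ as the needed input). Your explicit extension $0\to F\to E\to Q\to 0$ is essentially a specialisation of that cited construction to the plane-curve case, so the two approaches ultimately coincide; yours is self-contained, the paper's is a one-line citation.

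Your argument has one genuine gap. In the semistability check for $\delta$ odd you only treat rank-$1$ subsheaves of $E$; rank-$2$ subsheaves must also be excluded. This is easy but should be said: if $N\subset E$ has rank $2$, then either $N=F$ (so $\deg N=\delta$), or $N\cap F$ has rank $1$ with $\deg(N\cap F)\le\frac{\delta-1}{2}$ (since $F$ is stable of odd degree $\delta$) and $N/(N\cap F)\hookrightarrow Q$ gives $\deg(N/(N\cap F))\le\frac{\delta+1}{2}$; in either case $\deg N\le\delta<\frac{3\delta+1}{3}=2\mu(E)$. With this addition your construction is complete.
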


\begin{proof}
We have formulae for both $\gamma_2$ and $\gamma_3$; a straightforward computation shows that $\gamma_3 > \gamma_2$.
The existence statement follows from \cite[Proposition 3.5 and Remark 3.6]{cl1}.
\end{proof}

Finally we consider curves of Clifford dimension at least 3.

\begin{prop} \label{pr4.9}
Let $C$ be a curve for which neither $d_1$ nor $d_2$ computes $\gamma_1$. Then
$$
\gamma_3 = \frac{d_3 -2}{3}.
$$
\end{prop}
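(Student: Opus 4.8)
The plan is to combine the main theorem (Theorem \ref{thm3.4}) with the structural hypothesis on $C$, exactly as was done for general curves in Proposition \ref{pr3.8} but now exploiting the fact that neither $d_1$ nor $d_2$ computes $\gamma_1$. First I would recall from \cite[Theorem 6.1]{cl} that $\gamma_3 = \min\{\gamma'_3,\frac{d_3-2}{3}\}$, so it suffices to prove $\gamma'_3 \geq \frac{d_3-2}{3}$; the reverse inequality $\gamma_3 \leq \frac{d_3-2}{3}$ comes for free from that formula (and, for attainment, from Remark \ref{re3.7}, noting that under our hypothesis $d_3 \neq 3d_1$ since $d_1$ does not compute $\gamma_1$). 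By Theorem \ref{thm3.4} we have $\gamma'_3 \geq \min\{\frac{d_9}{3}-2,\frac{2\gamma'_2+1}{3}\}$, and since $d_9 \geq d_3$ trivially we get $\frac{d_9}{3}-2 \geq \frac{d_3}{3}-2 \geq \frac{d_3-2}{3}$, so the real work is to show $\frac{2\gamma'_2+1}{3} \geq \frac{d_3-2}{3}$, i.e.\ $2\gamma'_2 + 3 \geq d_3$.

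The next step is to pin down $\gamma'_2$ using the hypothesis. By \cite[Theorem 5.2]{cl}, $\gamma'_2 \geq \min\{\gamma_1,\frac{d_4}{2}-2\}$. Since $d_2$ does not compute $\gamma_1$, we have $\gamma_1 < d_2 - 2$ is false in the wrong direction — rather, the relevant input is that $\gamma_1 = d_k - 2k$ is not attained at $k=1$ or $k=2$, so $\gamma_1 \leq d_3 - 6$; combined with the general bound $d_r \geq \min\{\gamma_1+2r, g+r-1\}$ this forces $d_3 = \gamma_1 + 6$ (the alternative $d_3 = g+2$ would, together with $d_1, d_2 \geq \gamma_1+2, \gamma_1+4$ and the strict monotonicity $d_1<d_2<d_3$, still be consistent, so I would need to check that case separately or rule it out using $\gamma_1 \leq \frac{g-1}{2}$). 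I would then argue that $\gamma'_2 = \gamma_1$: indeed $\frac{d_4}{2}-2 \geq \frac{d_3+1}{2}-2 = \frac{\gamma_1+7}{2}-2 = \frac{\gamma_1+3}{2} \geq \gamma_1$ precisely when $\gamma_1 \leq 3$, so for $\gamma_1 = 3$ this works directly, while for $\gamma_1 \geq 4$ I would instead observe that $2\gamma'_2 + 3 \geq 2\cdot\frac{d_4-4}{2}+3 = d_4 - 1 \geq d_3$, again using $d_4 > d_3$.

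Assembling: once $d_3 = \gamma_1 + 6$ and $\gamma'_2 \geq \min\{\gamma_1, \frac{d_4}{2}-2\}$ are in hand, the inequality $2\gamma'_2 + 3 \geq d_3 = \gamma_1+6$ reduces to $2\gamma'_2 \geq \gamma_1 + 3$, which holds because either $\gamma'_2 = \gamma_1 \geq 3$ (giving $2\gamma_1 \geq \gamma_1 + 3$) or $\gamma'_2 \geq \frac{d_4}{2}-2 \geq \frac{d_3+1}{2}-2 = \frac{\gamma_1+3}{2}$, which is exactly $2\gamma'_2 \geq \gamma_1+3$. Hence $\frac{2\gamma'_2+1}{3} \geq \frac{d_3-2}{3}$ and therefore $\gamma'_3 \geq \frac{d_3-2}{3}$, so $\gamma_3 = \frac{d_3-2}{3}$.

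The main obstacle I anticipate is the bookkeeping around the two regimes in $d_r \geq \min\{\gamma_1+2r, g+r-1\}$: I must be careful that the hypothesis ``neither $d_1$ nor $d_2$ computes $\gamma_1$'' genuinely forces $d_3 = \gamma_1+6$ rather than the ``$g+r-1$'' alternative, which may require invoking $\gamma_1 \leq \frac{g-1}{2}$ (equivalently $g \geq 2\gamma_1+1$) to see that $\gamma_1 + 6 \leq g+2$ so that the minimum defining $d_3$ is the first term. Once that is settled the remaining inequalities are the routine kind of estimates already appearing in the proofs of Proposition \ref{pr3.8} and Corollary \ref{corr3.6}.
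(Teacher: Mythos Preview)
Your overall strategy is the same as the paper's --- reduce to the inequality $d_3 - 2 \le 2\gamma_1 + 1$ and then invoke Corollary~\ref{corr3.6} (or equivalently Theorem~\ref{thm3.4} together with \cite[Theorem 6.1]{cl}) --- but the crucial step is not established. You assert that the hypothesis forces $d_3 = \gamma_1 + 6$, arguing from ``$\gamma_1 \le d_3 - 6$'' together with the lower bound $d_3 \ge \min\{\gamma_1+6, g+2\}$. Both of these inequalities point the \emph{same} way: they each say $d_3 \ge \gamma_1 + 6$, and neither gives an upper bound on $d_3$. Nothing you have written rules out $d_3 > \gamma_1 + 6$ (which is exactly what happens when the Clifford dimension exceeds $3$, so that $d_3$ itself does not compute $\gamma_1$). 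What you actually need is $d_3 \le 2\gamma_1 + 3$, and for this the paper brings in an external ingredient you are missing: the result of Eisenbud--Lange--Martens--Schreyer \cite[Corollary 3.5]{elms}, which says that if $d_r$ computes $\gamma_1$ with $r\ge3$ then $d_r \ge 4r-3 \ge 3r$. From $d_3 \le d_r - (r-3)$ and $d_r \ge 3r$ one then gets $d_3 - 2 \le d_r - r + 1 \le 2d_r - 4r + 1 = 2\gamma_1 + 1$, which is the needed bound. The same ELMS input is also what the paper uses to verify the hypothesis $\frac{d_2}{2} \ge \frac{d_3}{3}$ of Corollary~\ref{corr3.6} (equivalently, of \cite[Theorem 6.1]{cl}), a check you skip entirely.

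There is also a minor slip: the chain $\frac{d_9}{3}-2 \ge \frac{d_3}{3}-2 \ge \frac{d_3-2}{3}$ is false at the second step (it says $-6 \ge -2$). The correct version, as in the proof of Corollary~\ref{corr3.6}, uses $d_9 \ge d_3 + 6$ to get $\frac{d_9}{3}-2 \ge \frac{d_3}{3} > \frac{d_3-2}{3}$.
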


\begin{proof}
Suppose $d_r$ computes $\gamma_1$, $r \geq 3$. By \cite[Corollary 3.5]{elms}, $d_r \geq 4r-3 \geq 3r$.
So 
$$
d_3 - 2 \leq d_r - r + 1 \leq 2d_r - 4r + 1 = 2\gamma_1 + 1.
$$
Now $d_3 \leq d_3 - r + 3$ and, since $d_2$ does not compute $\gamma_1$,
$$
d_2 \geq \gamma_1 + 5 = d_r -2r + 5;
$$
hence $\frac{d_2}{2} > \frac{d_3}{3}$ and by
Corollary \ref{corr3.6} we have
$$
\gamma_3 \geq \frac{d_3-2}{3}.
$$
This value is attained by $E_L$ where $L$ is a line bundle of degree $d_3$ with $h^0(L) = 4$.  
\end{proof}

\end{document}